\theoremstyle{plain} 
		\newtheorem{lemma}{Lemma}
		\newtheorem{theorem}{Theorem}
\begin{document}

\title{Asymptotics of linearized cosmological perturbations}

\author{Paul T. Allen\\ 
Max-Planck-Institut f\"ur Gravitationsphysik\\
Albert-Einstein-Institut\\
Am M\"uhlenberg 1\\
14476 Potsdam, Germany\\
\textit{and}\\
Interdisciplinary Arts and Sciences\\
University of Washington, Tacoma\\
1900 Commerce Street\\
Tacoma, WA 98402, USA\\
\\
Alan D. Rendall \\ 
Max-Planck-Institut f\"ur Gravitationsphysik\\
Albert-Einstein-Institut\\
Am M\"uhlenberg 1\\
14476 Potsdam, Germany}

\date{}

\maketitle

\begin{abstract}
In cosmology an important role is played by homogeneous and isotropic 
solutions of the Einstein-Euler equations and linearized perturbations
of these. This paper proves results on the asymptotic behaviour of 
scalar perturbations both in the approach to the initial singularity of 
the background model and at late times. The main equation of interest is
a linear hyperbolic equation whose coefficients depend only on time.
Expansions for the solutions are obtained in both asymptotic regimes.
In both cases it is shown how general solutions with a linear equation of
state can be parametrized by certain functions which are coefficients in the 
asymptotic expansion. For some nonlinear equations of state it is found that 
the late-time asymptotic behaviour is qualitatively different from that in 
the linear case. 
\end{abstract}

\section{Introduction}

Astronomical observations allow information to be collected about the 
distribution of matter in the universe. This distribution contains
structures on many different scales. Astrophysicists would like to
provide a theoretical account of how these structures formed.
In particular, cosmologists would like to do this for structures on
the largest scales which can be observed. This means for instance giving
an explanation of the way in which galaxies cluster. The most powerful
influence on the dynamics of the matter distribution on very large scales 
is gravity. The most appropriate description of the gravitational field
in this context is given by the Einstein equations of general relativity.  
It is also necessary to choose a model of the matter which generates the
gravitational field. A frequent choice for this is a perfect fluid
satisfying the Euler equations. Thus, from a mathematical point of view,
the basic object of study is the Einstein-Euler system describing the 
evolution of a self-gravitating fluid. This is a system of quasilinear
hyperbolic equations.

The standard cosmological models are the Friedmann-Lemaitre-Robertson-Walker
(FLRW) models which are homogeneous and isotropic. This means in particular
that the unknowns in the Einstein-Euler system depend only on time and the 
partial differential equations reduce to ordinary differential equations.
With appropriate assumptions on the fluid these ODE's can be solved explicitly
or, at least, the qualitative behaviour of their solutions can be determined
in great detail. When it comes to the study of inhomogeneous structures, 
however, the FLRW models are by definition not sufficient. Since fully
inhomogeneous solutions of the Einstein-Euler system are difficult to 
understand a typical strategy is to linearize the system about a 
background FLRW model. Under favourable conditions the linearized 
perturbations could give information about the evolution under the 
Einstein-Euler system of initial data which are small but finite
perturbations of those for the FLRW background. 

Linearization about a highly symmetric solution is a classical practise in
applied mathematics. For some examples see \cite{turing}, \cite{chandrasekhar}
and \cite{kellersegel}. It should be noted, however, that there is an unusual
feature in the case of the Einstein-Euler system which has to do with the
fact that these equations are invariant under diffeomorphisms. This is 
related to the fact that the only thing that is of physical significance
are equivalence classes of solutions under diffeomorphisms. Since it is 
not known how to develop PDE theory in a manifestly diffeomorphism-invariant
way this leads to difficulties. There is a corresponding equivalence relation
on linearized solutions. Different linearized solutions are related by the
linearizations of one-parameter families of diffeomorphisms, which are known
in the literature on cosmology as gauge transformations. In the end what is
interesting is not the vector space of solutions of the linearized equations
but its quotient by gauge transformations. It is useful to represent this
quotient space by a subspace. This is what is known in the literature on
cosmology as gauge-invariant perturbation theory. This subject would no doubt
benefit from closer mathematical scrutiny but that task will not be attempted 
in the present paper.

Instead the following pragmatic approach will be adopted: take an equation
from the astrophysical literature on cosmological perturbation theory and 
analyse the properties of its solutions. As a basic source the book of 
Mukhanov \cite{mukhanov} will be used. The notation in the following will 
generally agree with that of \cite{mukhanov}. It is standard to classify 
cosmological perturbations into scalar, vector and tensor perturbations.
These terms will not be defined here. It should be noted that scalar
perturbations play a central role in the analysis of structure formation.
This motivates the fact that the results of this paper are concerned with
that case. After a suitable gauge choice scalar perturbations are 
described by solutions of a scalar wave equation for a function $\Phi$
which corresponds, roughly speaking, to the Newtonian gravitational 
potential. In order to get definite expressions for the Einstein-Euler
system and its linearization about an FLRW model it is necessary to 
choose an equation of state $p=f(\epsilon)$ for the fluid. Here $\epsilon$
is the energy density of the fluid and $p$ its pressure. A case which
is particularly simple analytically is that of a linear equation of state
$p=w\epsilon$ where $w$ is a constant. For physical reasons $w$ is
chosen to belong to the interval $[0,1]$. In fact the condition $w\ge 0$ 
is necessary in order to make the Euler equations hyperbolic. The case
$w=0$, known as dust, is somewhat exceptional and does not always fit well 
with the general arguments in the sequel. Since, however, dust frequently 
comes up in the literature on cosmology it is important to include it. In
those cases where the general argument fails for dust this will be pointed
out. 

For a linear equation of state as just described the equation for $\Phi$ is
\begin{equation}\label{basic}
\Phi''+\frac{6(1+w)}{1+3w}\frac1{\eta}\Phi'=w\Delta\Phi
\end{equation} 
Here a prime stands for $\frac{d}{d\eta}$. The time coordinate $\eta$ belongs
to the interval $(0,\infty)$. The spatial variables, which will be denoted 
collectively by $x$, are supposed to belong to the torus $T^3$. Thus 
periodic boundary conditions are imposed. The Laplacian is that of a fixed
flat metric on the torus. Its expression in adapted coordinates agrees with
that for the usual Laplacian on ${\bf R}^3$. As a consequence of standard
theory for linear hyperbolic equations this equation has a unique solution
on the whole time interval $(0,\infty)$ for appropriate initial data given at 
a fixed time $\eta=\eta_0>0$. These are the restrictions of $\Phi$ and 
$\Phi'$ to $\eta=\eta_0$.

In the following, after some background and notation has been collected in 
Sect. \ref{background}, the asymptotics of solutions of equation \eqref{basic} 
is studied in the regimes $\eta\to 0$ and $\eta\to\infty$. Theorems and 
proofs for the first of these cases are given in Sect.~\ref{asympsing}
(Theorems \ref{LinearSingularityExpansion} and \ref{LinearSingularityData})
 and for the second in Sect.~\ref{asymplatetime} 
(Theorem \ref{LinearExpandingThm}).
It is shown how all solutions can be parametrized by asymptotic data in either 
of these regimes. These are alternatives to the usual parametrization of 
solutions by Cauchy data. An interesting feature of the expanding direction 
$\eta\to\infty$ is that the main part of the asymptotic data is a solution of 
the flat space wave equation $W''=w\Delta W$. Many of these results can be 
extended to more general equations of state. This is the subject of 
Theorem \ref{GeneralSingularityExpansion} of Sect.~\ref{asympsing} 
(limit $\eta\to 0$) and Sect.~5. It is found that for equations of state 
with power law behaviour $p\sim\epsilon^{1+\sigma}$ at low density there
is a bifurcation with a fundamental change in the asymptotic behaviour at
$\sigma=\frac13$.

\section{Preliminaries}\label{background}

As outlined above, we study perturbations of FLRW cosmological models which 
are spatially flat and have $T^3$ spatial topology. The spacetime being
perturbed, which we refer to as the background, is described by a metric of 
the form 
	\begin{equation}\label{BackgroundMetric}
	a^2\left(-d\eta^2 + dx^2\right) 
	\end{equation}
on $(0,\infty)\times T^3$.  Here $dx^2$ indicates the flat metric on $T^3$ and 
the scale factor $a=a(\eta)$ is a non-decreasing function of the conformal 
time $\eta$.  We use $x$ to indicate points on $T^3$. The signature used 
here is the opposite of that used by Mukhanov \cite{mukhanov} but all the 
equations required in the following are unaffected by this change.

We make use of the perfect fluid matter model, described by the pressure $p$ 
and energy density $\epsilon$ of the fluid.  In order to specify the matter
model completely, one must provide an equation of state $p=f(\epsilon)$. Under 
this ansatz, the Einstein-Euler equations reduce to a coupled system of ODEs 
for $a$ and $\epsilon$:
	\begin{align}
	 a''& = \frac{4\pi G}{3}\left(\epsilon -3f(\epsilon)\right)a^3
\label{FriedmannEqn}
	 \\
	 \epsilon' &=-3\mathcal{H}\left(\epsilon + f(\epsilon)\right).
\label{ContinuityEqn}
	\end{align}
As mentioned in the introduction, $(\phantom{\Phi} )'$ indicates a derivative 
with respect to $\eta$.  Here $G$ is Newton's gravitational constant and 
$\mathcal{H}$ is the conformal Hubble parameter, given by 
$\mathcal{H} = a^{-1}a'$.  We note the following useful relation
(known as the Hamiltonian constraint)
	\begin{equation}\label{SecondFriedmann}
	 \mathcal{H}^2 = \frac{8\pi G}{3}a^2\epsilon.
	\end{equation}
For a linear equation of state $f(\epsilon) = w\epsilon$, solutions $a(\eta)$ 
of \eqref{FriedmannEqn} are explicitly given by
	\begin{equation}
	\frac{a(\eta)}{a(\eta_0)} = \left(\frac{\eta}{\eta_0}\right)^{2/(1+3w)},
	\end{equation}
for some arbitrarily fixed $\eta_0\in(0,\infty)$.  As the scale factor $a$ 
vanishes as $\eta\to 0$, the spacetime develops a curvature singularity in 
that limit, which is known as a ``big-bang'' type singularity and is viewed as 
being in the past of $\eta_0$.  Likewise the limit as $\eta\to\infty$ is 
referred to as ``late times'' as it corresponds to the distant future of $\eta_0$.  
Note that spacetimes described by these models are expanding, in the sense 
that the scale factor is an increasing function of $\eta$.
Note also that, since $\epsilon'$ is negative, large values of $\eta$ 
correspond to small values of $\epsilon$ and vice-versa.

We study behavior near the singularity and at late times for those 
perturbations to the metric \eqref{BackgroundMetric} which are of the type 
usually referred to as scalar perturbations. They satisfy evolution equations
obtained by linearizing the 
Einstein equations about the FLRW background.  For the perfect fluid matter 
model all such perturbations can be described, up to gauge freedom, by a 
single function $\Phi(\eta,x)$.  Using a certain gauge, the 
conformal-Newtonian gauge, the metric takes the form
	\begin{equation}
	a^2\left[-(1+2\lambda\Phi)d\eta^2 +(1-2\lambda\Phi)dx^2\right] 
	\end{equation}
up to an error which is quadratic in the expansion parameter $\lambda$.
The first order perturbation satisfies the linearized Einstein-Euler equations 
provided
	\begin{equation}\label{MainEquation}
	 \Phi'' + 3\left(1 + f'(\epsilon)\right)\mathcal{H}\Phi' 
	 +3 \left(f'(\epsilon)-\frac{f(\epsilon)}
{\epsilon}\right)\mathcal{H}^2\Phi 
	 - f'(\epsilon)\Delta\Phi=0,
	\end{equation}
where $\Delta$ is the Laplacian for the flat metric on $T^3$.
For a derivation of this equation we refer the reader to \S 7.2 of 
\cite{mukhanov}.
The corresponding perturbations to the energy density, denoted by 
$\delta\epsilon$, are determined by
	\begin{equation}\label{EnergyPerturbation}
	\delta\epsilon = \frac{1}{4\pi G a^2}
\left(-3\mathcal{H} \Phi' - 3\mathcal{H}^2\Phi + \Delta\Phi \right)
	\end{equation}
and thus can be computed once \eqref{MainEquation} is understood.

The quantity $f'(\epsilon)$ represents the square of the speed of sound for 
the fluid.  For physical reasons we require that $f'$ always take values in the 
interval $[0,1]$ i.e., that the speed of sound be real and not exceed 
the speed of light. A special case of particular interest is that of a linear 
equation of state $p=w\epsilon$. In this situation the speed of sound is 
constant and equation \eqref{MainEquation} reduces to \eqref{basic}. Before
the asymptotics of solutions of \eqref{MainEquation} can reasonably be 
studied a prerequisite is a theorem which guarantees global existence of
solutions on the interval $(0,\infty)$. In order to get this from the standard
theory of hyperbolic equations it is necessary to assume that $f'$ never
vanishes. In the following it is always assumed that this holds 
except in the special case of dust which is discussed separately.

Our analysis below relies on establishing a number of energy-type estimates 
for solutions to \eqref{MainEquation}.  As the coefficients of this linear 
equation depend only on $\eta$, any spatial derivative of $\Phi$ satisfies the 
same equation.  Thus any estimate we obtain for $\Phi$, $\Phi'$, or 
$\nabla\Phi$ (the gradient of $\Phi$ with respect to the flat metric on 
$T^3$) holds also for all spatial derivatives of those quantities.  One may 
then make use of the Sobolev embedding theorem in order to establish pointwise 
estimates.  We also make use of the Poincar\'e estimate which implies that 
quantities having zero (spatial) mean value are controlled in $L^2$ by the 
norm of their (spatial) gradient.

Each of these norms is defined on the $\eta$-constant ``spatial'' slices of 
$(0,\infty)\times T^3$ with respect to the flat ($\eta$-independent) metric 
induced on $T^3$ by viewing $T^3$ as a quotient of Euclidean space.  All 
integration on $T^3$ is done with respect to the corresponding volume element 
which we suppress in our notation.  We generally suppress dependence of 
functions on the spatial variable $x$, except in situations where the 
inclusion of such dependence provides additional clarity.  When necessary, we 
denote Cartesian coordinates on $T^3$ by $x = (x^i)$; the corresponding 
derivatives are denoted $\partial_i$.

\section{Asymptotics in the approach to the singularity}\label{asympsing}

The purpose of this section is to analyse the asymptotics of solutions of 
\eqref{basic} in the limit $\eta\to 0$ and to give some extensions of these
results to more general equations of state which need not be linear. Define 
$\nu=\frac12\left(\frac{5+3w}{1+3w}\right)$. Note that $\nu$ belongs to the 
interval $[1,5/2]$.

\begin{theorem}\label{LinearSingularityExpansion}
Let $\Phi(\eta)$ be a smooth solution of \eqref{basic} on 
$(0,\infty)\times T^3$. Then there are coefficients $\Phi_{k,l}$ with 
$k\ge -2\nu$ belonging to an increasing sequence of real numbers tending to 
infinity and $l\in\{0,1\}$, smooth functions on $T^3$, such that the formal 
series  $\sum_k(\Phi_{k,0}+\Phi_{k,1}\log\eta)\eta^k$ is asymptotic to 
$\Phi(\eta)$ 
in the limit $\eta\to 0$ in the sense of uniform convergence of the function
and its spatial derivatives of all orders. All coefficients can be expressed 
as linear combinations of $\Phi_{-2\nu,0}$, $\Phi_{0,0}$ and their spatial 
derivatives. If $\nu$ is not an integer then all coefficients with $l=1$ 
vanish. For any value of $w$ the coefficients $\Phi_{k,l}$ with $l=1$ and $k<0$ 
vanish.  

In more detail, $\Phi_{k,0}$ may only be non-zero when $k$ is of the 
form $-2\nu+2i$ or $2i$ for a non-negative integer $i$ while $\Phi_{k,1}$
may only be non-zero for $k$ of the form $2i$ with $i$ a non-negative
integer. These coefficients are related by the following equations: 
\begin{equation}\label{consist1}
k(k+2\nu)\Phi_{k,0}=w\Delta\Phi_{k-2,0}-(2k+2\nu)\Phi_{k,1}
\end{equation}
and
\begin{equation}\label{consist2}
k(k+2\nu)\Phi_{k,1}=w\Delta\Phi_{k-2,1}. 
\end{equation}
\end{theorem}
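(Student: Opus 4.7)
The plan is to combine a formal Frobenius analysis with weighted energy estimates adapted to the singular point $\eta=0$. Observing that the coefficient of $\Phi'$ in \eqref{basic} equals $(2\nu+1)/\eta$, the equation can be recast in the self-adjoint form $(\eta^{2\nu+1}\Phi')' = w\eta^{2\nu+1}\Delta\Phi$, which exposes the structure of the singularity.

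First I would substitute the ansatz $\Phi \sim \sum_k (\Phi_{k,0}+\Phi_{k,1}\log\eta)\eta^k$ into \eqref{basic} and match coefficients of $\eta^{k-2}$ and $\eta^{k-2}\log\eta$ separately. A brief calculation reproduces the recursions \eqref{consist1}--\eqref{consist2}, with indicial equation $k(k+2\nu)=0$ and roots $0$ and $-2\nu$. Since $\Delta$ raises the exponent by $2$, the admissible indices form the two ladders $\{-2\nu+2i\}$ and $\{2i\}$; these coincide if and only if $\nu$ is a positive integer, and then the degeneracy of $k(k+2\nu)$ at the overlap forces the $\log$-coefficients to absorb the obstruction. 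Evaluating \eqref{consist1} at $k=-2\nu$ gives $\Phi_{-2\nu,1}=0$, and induction on the negative-exponent ladder rules out $\Phi_{k,1}$ for all $k<0$; the remaining coefficients are then determined inductively from $\Phi_{-2\nu,0}$ and $\Phi_{0,0}$.

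The analytic core is to show that every smooth solution admits this formal series as its asymptotic expansion in $C^\infty$. I would first establish a crude a-priori bound by multiplying the self-adjoint form by $\Phi'$, integrating over $T^3$, and handling the $\Delta\Phi$ term by parts. This yields an energy identity for
\begin{equation*}
E(\eta) = \int_{T^3} \eta^{2\nu+1}\bigl[(\Phi')^2+w|\nabla\Phi|^2\bigr]
\end{equation*}
whose $\eta$-derivative is controlled by $E$ plus a lower-order term. A Gr\"onwall argument run in the direction $\eta\to 0$ on any interval $(0,\eta_0]$ then produces $\|\Phi'(\eta)\|_{L^2}=O(\eta^{-2\nu-1})$ and, after integration, $\|\Phi(\eta)\|_{L^2}=O(\eta^{-2\nu})$. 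Because every spatial derivative solves the same equation, these bounds upgrade to all Sobolev norms and, by Sobolev embedding, to uniform pointwise bounds. Next I would extract the coefficients by iteration: integrating $(\eta^{2\nu+1}\Phi')' = w\eta^{2\nu+1}\Delta\Phi$ from $\eta$ to $\eta_0$ shows that $A(x) := \lim_{\eta\to 0}\eta^{2\nu+1}\Phi'(\eta)$ exists and is smooth, fixing $\Phi_{-2\nu,0}=-A/(2\nu)$. The function $\Psi = \Phi - \Phi_{-2\nu,0}\eta^{-2\nu}$ then solves \eqref{basic} with a known higher-order source, and rerunning the weighted energy estimate for the inhomogeneous equation yields sharper decay for $\Psi$ and lets one read off the next coefficient---and, when $\nu$ is an integer, any intervening $\log$-term---as a further limit. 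Letting $\Phi^{(N)}$ denote the partial sum of the formal series through order $\eta^N$, iteration gives $\|\Phi-\Phi^{(N)}\|_{H^s} = o(\eta^N)$ for every $s$ and $N$.

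The main obstacle is making this iteration close: at each step one must verify that the inhomogeneous equation for the remainder admits an energy estimate with a strictly sharper weight, of order $\eta^{2\nu+1+2N}$, so that the induction produces the next term with the correct decay. When $\nu$ is an integer, the logarithmic coefficients produced by the energy iteration must be shown to agree with those forced by \eqref{consist2}, and the resonance at the colliding ladder exponents $k \in \{0,2,\ldots,2\nu-2\}$ must be handled without loss. Careful bookkeeping of the residual source terms at each order is the main technical content.
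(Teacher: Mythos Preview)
Your plan is essentially the paper's: both rewrite \eqref{basic} in the self-adjoint form $(\eta^{2\nu+1}\Phi')'=w\eta^{2\nu+1}\Delta\Phi$, use a weighted $L^2$ energy to obtain the crude bound $\Phi=O(\eta^{-2\nu})$ (with all spatial derivatives, via Sobolev), and then iterate to extract the coefficients. The differences are purely in execution, and they bear directly on what you flag as the ``main obstacle''. First, the paper chooses the weight $\eta^{2(2\nu+1)}$ rather than your $\eta^{2\nu+1}$; with that choice the energy $\eta^{4\nu+2}E_1$ is \emph{monotone} in $\eta$, so no Gr\"onwall step is needed. Second, and more importantly, once the crude pointwise bound is in hand the paper never reruns an energy estimate: it simply substitutes the current finite expansion of $\Phi$ into the integral representation
\[
\Phi'(\eta)=\eta^{-2\nu-1}\Bigl[C+w\int_0^\eta\zeta^{2\nu+1}\Delta\Phi(\zeta)\,d\zeta\Bigr],
\]
each substitution automatically gaining two powers of $\eta$, and a $\log\eta$ appears exactly when the integrand meets $\zeta^{-1}$. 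Thus the ``strictly sharper weighted energy at each step'' that you anticipate is unnecessary, and the resonance bookkeeping collapses to tracking when that single integral produces a logarithm. Your scheme of subtracting the partial sum and re-deriving an inhomogeneous energy estimate at every order would also work, but it is heavier than what the problem requires.
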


\begin{proof}
The basic tool which allows the solutions to be controlled
is provided by energy estimates. Let
\begin{equation}\label{energy}
E_1(\eta)=\frac12\int_{T^3} |\Phi'(\eta)|^2+w|\nabla\Phi(\eta)|^2.
\end{equation}
It satisfies the identity
\begin{equation}
\frac{d}{d\eta}\left[\eta^{2(2\nu+1)}E_1(\eta)\right]=(2\nu+1)\eta^{4\nu+1}\int_{T^3} 
w|\nabla\Phi(\eta)|^2.
\end{equation}
Since the right hand side is manifestly non-negative it can be 
concluded that if an initial time $\eta_0$ is given then 
$\eta^{2(2\nu+1)}E_1(\eta)$ is bounded 
for $\eta\le\eta_0$. Any spatial derivative of $\Phi$ satisfies the same
equation as $\Phi$. Thus corresponding bounds can be obtained for the $L^2$ 
norms of all spatial derivatives. Applying the Sobolev embedding theorem then 
provides pointwise bounds for $\Phi$ and its spatial derivatives of all
orders in the past of a fixed Cauchy surface. These estimates can now
be put back into the equation to obtain further information about the 
asymptotics. To do this it is convenient to write \eqref{basic} in the
form
\begin{equation}
\frac{d}{d\eta}\left[\eta^{2\nu+1}\Phi'(\eta)\right]=\eta^{2\nu+1}w\Delta\Phi(\eta).
\end{equation}
It can be deduced that
\begin{multline}\label{intformula}
\Phi'(\eta)=\eta^{-2\nu-1}\Big[\eta_0^{2\nu+1}\Phi'(\eta_0)
\\ 
-w\int_0^{\eta_0}\zeta^{2\nu+1}\Delta\Phi(\zeta)d\zeta
+w\int_0^\eta\zeta^{2\nu+1}\Delta\Phi(\zeta)
d\zeta\Big]
.\end{multline}  
The bounds already obtained guarantee the convergence of the integrals.
This formula allows the asymptotic expansions to be derived inductively.
Using the fact that the second integral is $O(\eta^2)$ already gives a 
one-term expansion for $\Phi'$ and this can be integrated to give
a one-term expansion for $\Phi$. Analogous expansions can be obtained for
all spatial derivatives of $\Phi$ in the same way using the corresponding
spatial derivatives of \eqref{intformula}. When an asymptotic expansion 
with a finite number of explicit terms is substituted into the right hand
side of \eqref{intformula} an expansion for $\Phi'$ (and thus by integration
for $\Phi$) with additional explicit terms is obtained. If the last explicit
term in the input is a multiple of $\eta^p$ with $p<-2$ then there is one new 
term in the output and it is a multiple of $\eta^{p+2}$. If the last explicit 
term is a multiple of $\eta^{-2}$ there is one new term and it is a multiple 
of $\log \eta$. If the last explicit term is a multiple of $\log \eta$ then 
there are two new terms, one a multiple of $\eta^2\log \eta$ and one a 
constant. If the last explicit term is $\eta^p$ or $\eta^p\log \eta$ with 
$p>-2$ then there is one new term and it is a multiple of $\eta^{p+2}$ or 
$\eta^{p+2}\log \eta$ respectively. These statements rely on the fact that
when any of the terms in the asymptotic expansion is substituted into the
last integral in \eqref{intformula} the power $-1$ never arises. These 
remarks suffice to prove the first part of the theorem. The resulting series 
is by construction a formal series solution of the original equation. 
Comparing coefficients gives the rest of the theorem.
\end{proof}

Note that the only two values of $w$ in the range of interest where 
logarithmic terms occur in the expansions of the theorem are $w=\frac19$
and $w=1$ corresponding to $\nu=2$ and $\nu=1$ respectively. The two cases
of most physical interest, $w=0$ (dust) and $w=\frac13$ (radiation), are 
free of logarithms. In the case $w=0$ most of the expansion coefficients 
vanish and the two non-vanishing terms define an explicit solution which is 
a linear combination of two powers of $\eta$.

The relative density perturbation is given by
\begin{equation}\label{densitypert}
\frac{\delta\epsilon}{\epsilon}=
-2\Phi-2{\cal H}^{-1}\Phi'+\frac23{\cal H}^{-2}\Delta\Phi.
\end{equation}
Now ${\cal H}=\frac{2}{(1+3w)\eta}$. Substituting this relation and the 
asymptotic expansion for $\Phi$ into the expression for the density 
perturbation gives: 
\begin{multline}\label{densitypert2}
\frac{\delta\epsilon}{\epsilon}=\sum_k\left[-(k(1+3w)+2)\Phi_{k,0}
-(1+3w)\Phi_{k,1}+\frac16(1+3w)^2\Delta\Phi_{k-2,0}\right. \\
\left.+(-(k(1+3w)+2)\Phi_{k,1}+\frac16(1+3w)^2\Delta\Phi_{k-2,1}
\log\eta\right]\eta^k
\end{multline}

The relations in Theorem \ref{LinearSingularityExpansion} place no 
restrictions on the coefficients
$\Phi_{-2\nu,0}$ and $\Phi_{0,0}$ and so it is natural to ask if these can
be prescribed freely. In other words, if two smooth functions on $T^3$
are given, is there a smooth solution of the equations in
whose asymptotic expansion for $\eta\to 0$ precisely these functions
occur as the coefficients $\Phi_{-2\nu,0}$ and $\Phi_{0,0}$? The next 
theorem answers this question in the affirmative. Since the proof 
is closely analogous to arguments which are already in the literature
it will only be sketched.

\begin{theorem}\label{LinearSingularityData}
Let $\Psi_1$ and $\Psi_2$ be smooth functions on $T^3$.
Then there exists a unique solution of \eqref{basic} of the type considered
in Theorem \ref{LinearSingularityExpansion} with $\Phi_{-2\nu,0}=\Psi_1$ and 
$\Phi_{0,0}=\Psi_2$.
\end{theorem}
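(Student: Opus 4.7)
The plan is to build a formal series solution with the prescribed leading coefficients and realize it as the asymptotic expansion of an actual solution via a Fuchsian-type remainder argument. Setting $\Phi_{-2\nu,0} = \Psi_1$ and $\Phi_{0,0} = \Psi_2$, the consistency relations \eqref{consist1} and \eqref{consist2} determine all remaining coefficients inductively: the chain $k = -2\nu + 2i$ is generated from $\Psi_1$ with $\Phi_{k,1} \equiv 0$ throughout, and the chain $k = 2i$ is generated from $\Psi_2$, with logarithmic companions $\Phi_{2i,1}$ determined by \eqref{consist1} (seeded by $w\Delta\Phi_{-2,0}$ at $k=0$, nontrivial only when $\nu$ is an integer). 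Truncating the resulting formal series $\widetilde{\Phi}$ at high order $N$ yields a smooth function $\widetilde{\Phi}_N(\eta,x)$ whose residual
\begin{equation*}
\mathcal{R}_N := \widetilde{\Phi}_N'' + \tfrac{2\nu+1}{\eta}\widetilde{\Phi}_N' - w\Delta\widetilde{\Phi}_N
\end{equation*}
and all its spatial derivatives are $O(\eta^K)$ on $(0,\eta_0]$ with $K$ growing unboundedly in $N$.

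For existence, write $\Phi = \widetilde{\Phi}_N + U$ and seek $U$ vanishing at $\eta = 0$ faster than every term of $\widetilde{\Phi}_N$ and solving
\begin{equation*}
U'' + \tfrac{2\nu+1}{\eta}U' - w\Delta U = -\mathcal{R}_N.
\end{equation*}
Construct $U$ as the limit of Cauchy solutions $U_n$ with trivial data at a sequence $\eta_n \downarrow 0$; standard hyperbolic theory furnishes $U_n$ on $[\eta_n,\eta_0]$. Applying the weighted energy identity from the proof of Theorem~\ref{LinearSingularityExpansion}, augmented by a Cauchy--Schwarz bound on $\int U_n'\,\mathcal{R}_N$, to $G_n(\eta) := \eta^{2(2\nu+1)}E_1(U_n)(\eta)$ produces the differential inequality $\tfrac{d}{d\eta}\sqrt{G_n} \leq \tfrac{2\nu+1}{\eta}\sqrt{G_n} + C\eta^{2\nu+1}\|\mathcal{R}_N\|_{L^2}$, which integrates by Gronwall to $\sqrt{G_n}(\eta) \leq C\eta^{2\nu+2+K}$ uniformly in $n$. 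Since the coefficients of \eqref{basic} depend only on $\eta$, the same estimate applied to spatial derivatives of $U_n$ gives Sobolev bounds of all orders, and a compactness argument extracts a limit $U$ on $(0,\eta_0]$ with $U(\eta), U'(\eta), \nabla U(\eta) = O(\eta^{K+1})$. Choosing $N$ large enough that $K+1$ exceeds the largest exponent retained in $\widetilde{\Phi}_N$, we conclude that $\Phi = \widetilde{\Phi}_N + U$ has $\widetilde{\Phi}$ itself as its asymptotic expansion, with the prescribed leading coefficients $\Psi_1, \Psi_2$.

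For uniqueness, let $D$ be the difference of two solutions sharing the data $\Psi_1, \Psi_2$. Theorem~\ref{LinearSingularityExpansion} implies that their full expansions agree, so $D$, $D'$, and $\nabla D$ decay faster than any power of $\eta$ as $\eta \to 0$, and hence so does $G(\eta) := \eta^{2(2\nu+1)}E_1(D)(\eta)$. The homogeneous version of the weighted identity gives $G'(\eta) \leq \tfrac{2(2\nu+1)}{\eta}G(\eta)$; Gronwall then yields $G(\eta) \leq G(\eta_1)(\eta/\eta_1)^{2(2\nu+1)}$ for any $\eta_1 < \eta$, and sending $\eta_1 \to 0$ along the fast-decay sequence forces $G \equiv 0$, whence $D \equiv 0$. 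The main technical obstacle is the bookkeeping required to ensure that the Gronwall decay of $U$ beats the Gronwall growth factor $(\eta/\eta_n)^{2(2\nu+1)}$ \emph{and} surpasses the most subleading term retained in $\widetilde{\Phi}_N$, so that $U$ genuinely makes no contribution to the asymptotic coefficients; as the authors remark, this parallels standard Fuchsian constructions already in the literature and is organizational rather than conceptual.
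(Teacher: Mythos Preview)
Your argument is correct and takes a genuinely different route from the paper's proof. The paper reduces \eqref{basic} to a first-order Fuchsian system $\eta\partial_\eta V + NV = \eta^\zeta f$ by introducing $v^0=\eta v'$, $v^i=\eta\partial_i v$, then invokes the Kichenassamy--Rendall theory to obtain existence in the analytic category; smooth data are handled by approximating with analytic data, rewriting the system in symmetric hyperbolic form, and passing to the limit using energy estimates (a high-order formal solution is subtracted at this stage to make the relevant matrix positive). Uniqueness is likewise deferred to the energy argument of \cite{rendall00}.

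You bypass the Fuchsian machinery entirely. By subtracting a sufficiently high truncation $\widetilde\Phi_N$ at the outset and solving the inhomogeneous remainder equation as a limit of Cauchy problems launched from $\eta_n\downarrow 0$, you need only the weighted energy identity already established in Theorem~\ref{LinearSingularityExpansion}, augmented by a forcing term. The resulting Gronwall bound is uniform in $n$ because the homogeneous growth factor $(\eta/\eta_n)^{2(2\nu+1)}$ is beaten by the $\eta^{K}$ decay of $\mathcal R_N$ once $N$ is large, exactly as you note. This is more self-contained for the present linear problem and avoids the detour through analytic approximation; the paper's approach, on the other hand, plugs directly into an existing general theory that extends to nonlinear Fuchsian systems (as exploited for Gowdy spacetimes), which is why the authors chose it. One small point worth making explicit in your uniqueness step: $E_1(D)\equiv 0$ yields only $D'\equiv 0$ and (for $w>0$) $\nabla D\equiv 0$, so $D$ is a constant; that constant is then forced to vanish because its $\Phi_{0,0}$ coefficient is zero by hypothesis.
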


\begin{proof}[Proof (sketch)]
The proof of this theorem uses Fuchsian techniques. It 
implements the strategy applied in \cite{rendall00} to prove theorems on the 
existence of solutions of the vacuum Einstein equations belonging to the Gowdy 
class with prescribed singularity structure. In the present situation some 
simplifications arise in comparison to the argument for Gowdy due to the fact
that the equation being considered is linear. The procedure is to first
treat the case of analytic data and then use the resulting analytic 
solutions to handle the smooth case.  To reduce the equation to
Fuchsian form the following new variables are introduced. First 
define a function $v(\eta,x)$ by the relation
\begin{equation}\label{expansion}
\Phi(\eta)=\Psi_1\eta^{-2\nu}+\sum_{-2\nu<k<0}\Phi_{k,0}\eta^k+\Phi_{0,1}\log\eta
+\Psi_2+v(\eta).
\end{equation}
Here it is assumed that the consistency relations \eqref{consist1} hold for
$-2\nu\le k\le 0$. As a consequence of these relations and the original 
equation, $v$ satisfies
\begin{equation}\label{firstfuch}
v''+\frac{2\nu+1}{\eta}v'-w\Delta v=w\Delta\Phi_{0,1}\log\eta+w\Delta\Psi_2
+w\sum_{0<k<2}\Delta\Phi_{k,0}\eta^k
\end{equation}
Note that the last sum will contain one non-vanishing term for $\nu$ not an 
integer and none for $\nu$ an integer. Denote the right hand side of 
\eqref{firstfuch} by $Q$. This equation can be reduced to a first order 
system by introducing new variables $v^0=\eta v'$ and $v^i=\eta\partial_i v$. 
Let $V$ be the vector-valued unknown with components $(v,v^0,v^i)$. Then the
first order system is
\begin{equation}
\eta\partial_\eta V+NV=\eta^\zeta f(\eta,V,DV)
\end{equation}
where
\begin{equation}
N=\left[\begin{array}{ccc}
0 & -1 & 0\\0 & 2\nu & 0\\ 0 & 0 & 0\end{array}\right]
\ \ \ {\rm and}\ \ \ 
f=\left[\begin{array}{c}
0\\ 
\eta^{1-\zeta}w\partial_i v^i+\eta^{2-\zeta}Q\\
\eta^{1-\zeta}\partial_i(v^0+v)\end{array}\right].
\end{equation}

Here $\zeta$ is any positive real number less than one and $DV$ denotes 
the collection of spatial derivatives of $V$. 
It will be shown that this equation has a unique solution $v$ which converges 
to zero as $\eta\to 0$. Initially we assume that the functions 
$\Psi_1$ and $\Psi_2$ are analytic. Then results  
proved in \cite{kr} can be applied. See also section 4 of \cite{ar} for some 
further information on these ideas. One of the hypotheses required
is that $f$ is regular in the analytic sense defined there. What this means 
is that $f$ and all its derivatives with respect to any argument other 
than $\eta$ are real analytic for $\eta>0$ and extend continuously  to 
$\eta=0$. The other hypothesis is that the matrix exponential
$\sigma^N$ should be uniformly bounded for all $0<\sigma<1$. This follows 
from the fact that $N$ is diagonalizable with non-negative eigenvalues. 

To extend this result to the smooth case more work is necessary. The basic
idea is to approximate the smooth functions $\Psi_1$ and $\Psi_2$ by
sequences of analytic functions $(\Psi_1)_n$ and $(\Psi_2)_n$, apply the
analytic existence theorem just discussed to get a sequence of solutions
$V_n$ of the Fuchsian system and then show that $V_n$ tends to a limit
$V$ as $n\to\infty$. The function $V$ is then the solution of the problem
with smooth data. To show the convergence of $V_n$ suitable estimates
are required and in order to obtain these the Fuchsian equation is written 
in an alternative form which is symmetric hyperbolic. This rewriting is
only possible for $w\ne 0$ but for $w=0$ the system, being an ODE, is 
already symmetric hyperbolic and so the extra step is not required.
In general a simplification of the system is achieved by introducing a new 
time variable by $t=\eta^\zeta$ and rescaling $f$ by a factor $\zeta^{-1}$. 
Then the system can be written as 
\begin{equation}
tA^0\partial_t V+tA^j\partial_jV+MV=tg(t,V,DV)
\end{equation}
where
\begin{equation}
M=\left[\begin{array}{ccc}
0 & -1 & 0\\0 & \frac{2\nu}{\zeta w} & 0\\ 0 & 0 & 
-\frac{1+\zeta}{\zeta}I\end{array}\right]
\ \ \ , \ \ \ 
g=\left[\begin{array}{c}
0\\ 
t^{\frac{2-\zeta}{\zeta}}Q\\
0\end{array}\right]
\end{equation} 
and the other coefficient matrices are given by $A^0={\rm diag}(1,\frac1w,I)$ 
and
\begin{equation}
A^j=\left[\begin{array}{ccc}
0 & 0 & 0\\
0 & 0 & -\zeta^{-1}t^{\frac{1-\zeta}{\zeta}}e_j\\
0 & -\zeta^{-1}t^{\frac{1-\zeta}{\zeta}}e_j & 0
\end{array}\right]
\end{equation}
with $e_j$ the $j$th standard basis vector in $R^3$. This is a symmetric
hyperbolic system. A disadvantage is that in passing from $N$ to $M$
positivity is lost. 

The fact that $M$ has a negative eigenvalue can be overcome by 
subtracting an approximate solution from $v$ to obtain a new unknown.
Expressing the equation in terms of the new unknown leads to a system
which is similar to that for $v$ but with $M$ replaced by $M+nI$ for an
integer $n$. For $n$ sufficiently large this means that the replacement
for $M$ is positive definite. With this choice the system is both 
in Fuchsian form and symmetric hyperbolic. The necessary approximate
solution can be taken to be a formal solution of sufficently high order  
as introduced in section 2 of \cite{rendall00}. The fact that the system 
is symmetric hyperbolic leads to energy estimates which can be used to 
prove the convergence of the sequence of analytic solutions to a solution
corresponding to the smooth initial data, thus completing the proof of 
the existence part of the theorem. Uniqueness can be proved using an
energy estimate as has been worked out in \cite{rendall00}.
\end{proof}

It would presumably be possible to extend the above results to the case that 
the data are only assumed to belong to a suitable Sobolev space. An 
alternative approach to doing so would be try to apply ideas in the paper
\cite{kichenassamy} of Kichenassamy.

The proofs just presented have been strongly influenced by work on 
Gowdy spacetimes. For a special class of these, the polarized
Gowdy spacetimes, the basic field equation is $P_{tt}+t^{-1}P_t=P_{xx}$.
Evidently this is closely related to \eqref{basic} although they
are not identical for any choice of $w$, even if attention is 
restricted to solutions of \eqref{basic} depending on only one space 
variable. The energy arguments above were inspired by those applied to the 
polarized Gowdy equation in \cite{im}. The following analogue 
of Theorem 2 is a special case of a result in \cite{rendall00}. If smooth
periodic functions $k(x)$ and $\omega(x)$ are given with $k$ everywhere
positive there is a smooth solution of the polarized Gowdy equations
which satisfies  
\begin{equation}
P(t,x)=k(x)\log t+\omega (x)+o(1)
\end{equation}
as $t\to 0$. It is plausible that the positivity restriction on $k$, while
very important for general (non-polarized) Gowdy spacetimes, should be 
irrelevant in the polarized case. It turns out that following the arguments
used above to analyse \eqref{basic} allows this intuition to be proved
correct.

One way of attempting to reduce the polarized Gowdy equation to Fuchsian
form is to mimic \eqref{expansion} and write $P=k\log t +\omega+v$. This 
fails because the analogue of the matrix $N$ has $\nu$ replaced by zero.
Thus the matrix has all eigenvalues zero and includes a non-trivial Jordan 
block. To access the Fuchsian theory in the analytic case the
expansion for $P$ may be replaced by 
\begin{equation}
P=k\log t +\omega+t^\delta v
\end{equation} 
for a small positive $\delta$. With this modification the reduction procedure 
applied to \eqref{basic} gives a Fuchsian system. It can be concluded that
$k$ and $\omega$ can be prescribed in the case that they are analytic.
Once this has been achieved the smooth case can be handled just as in
the proof of Theorem 2.

It will now be shown that some of the results which have been proved for a
linear equation of state can be extended to more general equations of state.
In the discussion which follows it will be convenient to exclude the case
of a linear equation of state which has been treated already. This in 
particular excludes dust so that by our general assumptions $f'$ never
vanishes. In this case we consider solutions to \eqref{MainEquation} rather than 
\eqref{basic}.
Choose an initial time $\eta_0$ and for a given background solution let
$\epsilon (\eta_0)=\epsilon_0$. 
From the condition that $f'(\epsilon)\leq 1$ it follows that
\begin{equation}
\Lambda:=\sup_{(\epsilon_0,\infty)}\left|f'(\epsilon)-\frac{f(\epsilon)}
{\epsilon}\right|^{1/2}
\end{equation}
is strictly positive and finite. It will be assumed in addition that the 
equation of state satisfies the condition
\begin{equation}\label{secondderiv}
\sup_{(\epsilon_0,\infty)}\left|\left(\frac{\epsilon+f(\epsilon)}
{f'(\epsilon)}\right)\frac{d^2 f}{d\epsilon^2}\right|<\infty.
\end{equation}
Using the fact that $\Lambda>0$ it follows that there exists a positive 
number $\lambda$ satisfying the following three inequalities:
\begin{equation}\label{ineq1}
2\lambda\frac{df}{d\epsilon}\ge 3(\epsilon+f(\epsilon))
\frac{d^2 f}{d\epsilon^2},
\end{equation}

\begin{multline}\label{ineq2}
4\lambda^2-2\left[6\left(1+f'(\epsilon)\right)+\left(1+\frac{3f(\epsilon)}
{\epsilon}
\right)\right]\lambda  \\
+6\left(1+f'(\epsilon)\right)\left(1+\frac{3f(\epsilon)}{\epsilon}\right) 
- \Lambda^{-2}\left|\Lambda^{2}-3\left(f'(\epsilon)-\frac{f(\epsilon)}
{\epsilon}\right)
\right|^2\ge 0
\end{multline}
and

\begin{equation}\label{ineq3}
\lambda\ge 3(1+f'(\epsilon)).
\end{equation}
That \eqref{ineq1} can be satisfied follows from \eqref{secondderiv}. The 
fact that $f'(\epsilon)$ and $f(\epsilon)/\epsilon$ are bounded means 
that the first term in the expression on the left hand side of \eqref{ineq2} 
dominates the other terms for $\lambda$ sufficiently large and so the second
condition on $\lambda$ can also be satisfied. The constant $\lambda$ can be
chosen to satisfy \eqref{ineq3} since the right hand side of that inequality
is bounded. Note for comparison that for a linear equation 
of state $\Lambda=0$. In that case $\lambda$ can be taken to be 
the larger root of the expression obtained from the left hand side of 
\eqref{ineq2} by omitting the term containing $\Lambda$. This root is $3(1+w)$.
Define the following generalization of the energy functional \eqref{energy}:
\begin{equation}
E_2(\eta)=\frac12\int_{T^3} |\Phi'(\eta)|^2+f'(\epsilon)|\nabla\Phi(\eta)|^2
+\Lambda {\cal H}^2|\Phi(\eta)|^2.
\end{equation}
(Note that we suppress the dependence of $\epsilon$ and $\mathcal{H}$ on 
$\eta$.)
A computation shows that if $a$ denotes the scale factor, then due to the
inequalities \eqref{ineq1}-\eqref{ineq3}
\begin{equation}\label{energynonlin}
\frac{d}{d\eta}\left[a^{2\lambda}E_2(\eta)\right]\ge 0.
\end{equation}
In more detail, computing the time derivative of $a^{2\lambda}E_2$ and using 
equation \eqref{MainEquation} along with the equations satisfied by the 
background
quantities $\epsilon$ and ${\cal H}$ gives an integral where the 
the integrand is a sum of terms each of which has a factor $\Phi^2$,
$|\Phi'|^2$, $\Phi\Phi'$ or $|\nabla\Phi|^2$. The aim is to show that
the sum of these terms is non-negative. To do this it is first assumed that
the coefficient of $|\nabla\Phi|^2$ is non-negative. This leads to the 
condition \eqref{ineq1}. Next it is shown that the quadratic form in
$\Phi$ and $\Phi'$ is positive semidefinite. This can be done by using 
the inequality 
\begin{equation}
|\Lambda^{} {\cal H}\Phi\Phi'|\le 
\frac{\delta}{2}\Lambda^2 {\cal H}^2\Phi^2 +\frac{1}{2\delta}(\Phi')^2,
\end{equation}
which holds for any $\delta>0$, to estimate the quadratic form from below by 
the following sum of a term containing $\Phi^2$ and one containing $|\Phi'|^2$:
\begin{multline}
\frac12\Lambda^2{\cal H}^2\left[2\lambda-\left(1+\frac{3f(\epsilon)}
{\epsilon}\right)
-\delta \Lambda^{-1} \left|\Lambda^2-3\left(f'(\epsilon) -\frac{f(\epsilon)}
{\epsilon}\right)\right|\right]\Phi^2
 \\
+\frac12\left[2\lambda-6(1+f'(\epsilon))
-\frac{\Lambda^{-1}}{\delta}\left|\Lambda^2-{3}\left(f'(\epsilon)
-\frac{f(\epsilon)}{\epsilon}\right)\right|\right]|\Phi'|^2
\end{multline}
It 
remains to ensure that the coefficients of these terms are non-negative and 
this follows from \eqref{ineq2} and \eqref{ineq3}, choosing $\delta$
sufficiently small. It can be concluded from 
\eqref{energynonlin}
that $E_2(\eta)=O(a(\eta)^{-2\lambda})$ as $\eta\to 0$. As in the case
of a linear equation of state, corresponding estimates hold for spatial 
derivatives and pointwise estimates follow by Sobolev embedding. An integral
formula for $\Phi'$ can be obtained as in the case of a linear equation of
state. It reads (with some arguments suppressed; recall $\epsilon$ and 
$\mathcal{H}$ depend on $\eta$)
\begin{multline}\label{intformula2}
\Phi'(\eta)=\left(f(\epsilon)+\epsilon\right)
\left[\frac{\Phi'(\eta_0)}{f(\epsilon_0)+\epsilon_0}\right.
 \\
\left.-\int_{\eta_0}^\eta\frac{1}{f(\epsilon)+\epsilon}
\left(f'(\epsilon)\Delta\Phi+3\left(f'(\epsilon)-\frac{f(\epsilon)}
{\epsilon}\right)\mathcal{H}^2\Phi\right)
\right]
\end{multline}
If no further assumptions are made on the equation of state then using
the known boundedness statements and repeatedly substituting into the 
right hand side of \eqref{intformula2} would lead to unwieldy expressions 
involving iterated integrals. Simpler results can be obtained if it is assumed
that in the limit $\epsilon\to\infty$ the function $f$ is linear in leading
order with lower powers as corrections. In other words for this assume that
$f$ admits an asymptotic expansion of the form
\begin{equation}\label{eqnofstate}
f(\epsilon)\sim w\epsilon+\sum_{j=1}^\infty f_j\epsilon^{a_j}
\end{equation} 
as $\epsilon\to\infty$. Here the $f_j$ are constants while $\{a_j\}$ is a 
decreasing sequence of real numbers all of which are less than one and which 
tend to $-\infty$ as $j\to\infty$. Assume further that the relation obtained
by differentiating this expansion term by term any number of times is also a 
valid asymptotic expansion. To have a concrete example, consider the 
polytropic equation of state which is given parametrically by the relations
\begin{equation}\label{polytropic}
\epsilon=m+Knm^{\frac{n+1}{n}},\qquad p=Km^{\frac{n+1}{n}}
\end{equation}
with constants $K$ and $n$ satisfying $0<K<1$ and $n>1$. In this case
the asymptotic expansion is of the form
\begin{equation}\label{stateexpansion}
f(\epsilon)=n^{-1}\epsilon+n^{-1}(Kn)^{\frac{1}{n+1}}\epsilon^{\frac{n}{n+1}}
+\ldots
\end{equation}
Returning to the more general case \eqref{eqnofstate}, define a quantity
$m$ by
\begin{equation}\label{massdensity}
m(\epsilon)=\exp\left\{\int_1^\epsilon (\xi+f(\xi))^{-1}d\xi \right\}
\end{equation} 
Substituting the asymptotic expression \eqref{eqnofstate} into 
\eqref{massdensity} gives a corresponding asymptotic expansion for the 
function $m(\epsilon)$ as a sum of powers of $\epsilon$ with the leading 
term being proportional to $\epsilon^{\frac{1}{w+1}}$. It follows from the 
continuity equation \eqref{ContinuityEqn} for the fluid that $m$ is 
proportional to $a^{-3}$. This 
leads to an asymptotic expansion for $\epsilon$ in terms of $a$. The equation 
\eqref{SecondFriedmann}
 implies that
$a'=\sqrt{8\pi G/3}\epsilon^{1/2}a^2$; substituting for
$\epsilon$ in terms of $a$ gives rise to a relation which can be 
integrated to give an asymptotic expansion for $a$ in terms of $\eta$
in the limit $\eta\to 0$. The leading term is proportional to 
$\eta^{\frac{2}{3w+1}}$. Substituting this back in leads to an asymptotic
expansion for $a'$ from which an asymptotic expansion for ${\cal H}$ can be
obtained. An asymptotic expression for $\epsilon$ in terms of $\eta$ can also
be derived. Thus in the end there are expansions for all the important
quantities in the background solution in terms of $\eta$. In all cases 
the leading term in the expansion agrees with that in the case of a linear 
equation of state. The result is an integral equation which can be 
written in the form
\begin{eqnarray}\label{intformula3}
&&\Phi' (\eta)=h_1 (\eta)\left[C-
\int_0^{\eta_0} h_2(\zeta)\Delta\Phi (\zeta)
-h_3(\zeta)\Phi (\zeta)d\zeta\right.\nonumber    \\
&&\left.+\int_0^\eta h_2(\zeta)\Delta\Phi (\zeta)
+h_3(\zeta)\Phi (\zeta)d\zeta\right]
\end{eqnarray} 
where $C$ is a constant depending only on the data at time $\eta_0$
and asymptotic expansions are available for the 
functions $h_1$, $h_2$ and $h_3$. The leading terms in $h_1$ and $h_2$
are constant multiples of the corresponding powers of $\eta$ for a linear 
equation of state. To see the leading order behaviour of $h_3$ recall that 
in \eqref{intformula3}, the coefficient of $\Phi$ is
\begin{equation}\label{evolH}
3\left(f'(\epsilon)-\frac{f(\epsilon)}{\epsilon}\right){\cal H}^2.
\end{equation}
Hence if $f_j$ is the first non-vanishing coefficient in the expansion
\eqref{stateexpansion} then the leading order power in $h_3$ is less
than that in $h_2$ by $\alpha=2-\frac{6(1+w)(1-a_j)}{1+3w}$. To obtain 
estimates close to $\eta=0$ the estimate for the
energy can be applied starting from $\eta$ very small. In other words,
$\epsilon_0$ can be chosen as large as desired. Then all the coefficients in 
the left hand side of \eqref{ineq2} not involving $\Lambda$ are as close as 
desired to those for the corresponding linear equation of state.
Since $\Lambda$ is arbitrarily small, the coefficient involving $\Lambda$ is 
also arbitrarily small. It follows that $\lambda$ can be chosen to
have any value strictly greater than $3(1+w)$. Hence $E$ can be bounded
by any power greater than the power in the corresponding linear case.  
This is enough to proceed as in the proof of Theorem 1 to 
obtain an asymptotic expansion for $\Phi$ where each invidual term is 
a constant multiple of an expression of the form $\eta^k(\log\eta)^l$
with $l=0$ or $l=1$ and the leading term is just as in Theorem 1 with the 
corresponding value of $w$. The key thing that makes this work is that
$\alpha<2$ so that no logarithms are generated when evaluating the 
integral in \eqref{intformula3} in the course of the iteration.
The results of this discussion can be summed up as follows.

\begin{theorem}\label{GeneralSingularityExpansion}
Let $\Phi$ be a smooth solution of \eqref{MainEquation} on 
$(0,\infty)\times T^3$. Suppose that the equation of state has an asymptotic
expansion of the form \eqref{eqnofstate}. Then there are coefficients 
$\Phi_{k,l}$ 
with $k\ge -2\nu$ belonging to an increasing sequence of real numbers tending 
to infinity and $l\in\{0,1\}$, smooth functions on $T^3$, such that the formal 
series $\sum_k\Phi_{k,l}(\log\eta)^l\eta^k$ is asymptotic to $\Phi$ in the 
limit $\eta\to 0$. All coefficients in the expansion are determined uniquely
by $\Phi_{-2\nu,0}$ and $\Phi_{0,0}$.
\end{theorem}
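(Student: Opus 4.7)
The strategy mirrors that of Theorem~\ref{LinearSingularityExpansion}: obtain an a priori bound by a weighted energy estimate, convert the equation to an integral form whose kernel can be analysed asymptotically, and iterate to peel off successive terms of the expansion. The preceding discussion has already assembled the main ingredients, so the plan is to indicate how they fit together.

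First I would use the energy $E_2$ together with the monotonicity \eqref{energynonlin} to produce pointwise control of $\Phi$ and its spatial derivatives near $\eta=0$. Since $\Lambda$ is defined as a supremum over $(\epsilon_0,\infty)$ and shrinks to zero when $\epsilon_0\to\infty$ under \eqref{eqnofstate}, choosing the initial time sufficiently small lets me pick $\lambda$ arbitrarily close to $3(1+w)$. This yields $E_2(\eta)=O(a(\eta)^{-2\lambda})$, and applying the same bound to spatial derivatives and invoking Sobolev embedding gives pointwise control of all derivatives of $\Phi$ with effectively the same growth rate as in the linear case.

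Next I would generate asymptotic expansions for the background quantities in powers of $\eta$. Substituting \eqref{eqnofstate} into the definition \eqref{massdensity} produces an expansion of $m(\epsilon)$ as a sum of powers of $\epsilon$ with leading term proportional to $\epsilon^{1/(w+1)}$. Since $m\propto a^{-3}$ by \eqref{ContinuityEqn}, this inverts to give $\epsilon$ as a series in $a$, and then \eqref{SecondFriedmann} supplies an ODE for $a$ whose integration yields an asymptotic series for $a(\eta)$ with leading term $\eta^{2/(3w+1)}$. Differentiation provides the corresponding expansions for $\mathcal{H}$ and $\epsilon$ as functions of $\eta$. Plugging these into \eqref{intformula2} recasts the equation for $\Phi'$ in the form \eqref{intformula3}, with coefficients $h_1$, $h_2$, $h_3$ admitting asymptotic expansions whose leading terms coincide with those appearing in the linear analysis.

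The core step is then an induction based on \eqref{intformula3} analogous to the one carried out for Theorem~\ref{LinearSingularityExpansion}. Substituting a finite partial sum into the integrand and using the expansions of $h_1$, $h_2$, $h_3$ produces a new partial sum with strictly more explicit terms. The crucial structural point, already noted in the preceding discussion, is that the exponent shift of $h_3$ relative to $h_2$ is $\alpha<2$, so no power substituted into the integral ever meets the critical value $-1$ that would create an unexpected $\log\eta$. Consequently the same pattern of admissible powers and the same placement of logarithmic factors as in Theorem~\ref{LinearSingularityExpansion} persists, and comparing coefficients in the equation produces recurrence relations which, since the indicial exponents at $k=-2\nu$ and $k=0$ are unchanged, determine every coefficient uniquely from $\Phi_{-2\nu,0}$ and $\Phi_{0,0}$. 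The main obstacle I anticipate is bookkeeping rather than analytic: one must track how the new sequence of shifts introduced by the corrections $f_j\epsilon^{a_j}$ interleaves with the powers $-2\nu+2i$ and $2i$ inherited from the linear problem, and verify at every stage of the iteration that no resonance forces a logarithm beyond those already permitted by the linear theorem.
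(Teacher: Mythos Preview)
Your proposal is correct and follows essentially the same route as the paper: the energy $E_2$ with $\lambda$ pushed arbitrarily close to $3(1+w)$ by shrinking the initial time, the background expansions via $m(\epsilon)$ and \eqref{SecondFriedmann}, the integral representation \eqref{intformula3}, and the iteration with the $\alpha<2$ observation preventing extra logarithms. The only point worth sharpening is your remark that ``no power substituted into the integral ever meets the critical value $-1$'': the $h_2$ part behaves at leading order exactly as in the linear case and can still produce the logarithms of Theorem~\ref{LinearSingularityExpansion}; what $\alpha<2$ guarantees is that the new $h_3$ term, together with the higher-order corrections to $h_1$ and $h_2$, never independently hits the resonant power, so no logarithms appear beyond those already present in the linear problem.
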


\section{Late-time asymptotics for a linear equation of state}
\label{asymplatetime}

In this section information is obtained about the asymptotics of 
solutions of equation \eqref{basic} in the limit $\eta\to\infty$; 
some extensions of these results to more general equations of state are 
derived in Sect. \ref{asymplatetimegen}. Once again energy estimates play 
a fundamental role. 
In this case it is convenient to treat homogeneous solutions separately. 
By a homogeneous solution we mean one which does not depend on the spatial
coordinates. These can be characterized as the solutions whose initial
data on a given spacelike hypersurface do not depend on the spatial
coordinates. For this class of solutions equation \eqref{basic} can be 
solved explicitly with the result that $\Phi=A+B\eta^{-2\nu}$ for constants 
$A$ and $B$. A general solution can be written as the sum of a 
homogeneous solution and a solution such that $\Phi$ has zero mean 
on any hypersurface of constant conformal time. Call solutions of the latter 
type zero-mean solutions. Then in order to determine the late-time asymptotics
for general solutions it suffices to do so for zero-mean solutions. In
this case define $\psi(\eta)=\eta^{\nu+\frac12}\Phi(\eta)$. Then $\psi$ 
satisfies the equation
\begin{equation}\label{evolpsi}
\psi''=w\Delta\psi+\left(\nu^2-\frac14\right)\eta^{-2}\psi
\end{equation}
Define an energy by
\begin{equation}\label{energy2}
E_3(\eta)=\frac12\int_{T^3} |\psi'(\eta)|^2+w|\nabla\psi(\eta)|^2.
\end{equation}
Then
\begin{equation}
E_3'(\eta)=2\left(\nu^2-\frac14\right)\eta^{-2}\int_{T^3} 
\psi(\eta)\psi'(\eta).
\end{equation}
The integral on the right hand side of this equation can be bounded, using
the Cauchy-Schwarz inequality, in terms of the $L^2$ norms of $\psi'$ and
$\psi$. The first of these can be bounded in terms of the energy and due
to the fact that the mean value of $\psi$ is zero, the same is true of the
second. Thus $E_3'(\eta)\le C\eta^{-2} E_3(\eta)$ for a constant 
$C$. By 
Gronwall's inequality it follows that $E_3$ is globally bounded in the 
future. These 
arguments apply equally well to spatial derivatives of $\psi$ of any order.
By the Sobolev embedding theorem it can be concluded that $\psi$ and its
spatial derivatives of any order are bounded. The energy bounds and the 
basic equation then imply that all spacetime derivatives of any order are
uniformly bounded in time.

Let $\eta_j$ be a sequence of times tending to infinity and consider the
translates defined by $\psi_j(\eta)=\psi(\eta+\eta_j)$. The sequence 
$\psi_j$ satisfies uniform $C^\infty$ bounds. Consider the restriction
of this sequence to an interval $[\eta_0,\eta_1]$. By the Arzel\`a-Ascoli
theorem the sequence of restrictions has a uniformly convergent subsequence.
By passing to further subsequences and diagonalization it can be shown
that $\psi$ and its spacetime derivatives of all orders converge uniformly
on compact subsets to a limit $W$. Passing to the limit in the evolution 
equation for $\psi$ along one of these sequences shows that $W$ satisfies 
the flat-space wave equation $W''=w\Delta W$.  Note that \emph{a priori} the 
function $W$ could depend on the sequence of times chosen. This issue is 
examined more closely below.

Given a smooth solution of \eqref{evolpsi} it is possible to 
do a Fourier transform in space to get the equation
\begin{equation}\label{mode}  
\hat\psi''=-w|k|^2\hat\psi+\left(\nu^2-\frac14\right)\eta^{-2}\hat\psi
\end{equation}
which is referred to below as the mode equation. Here $k$ is a vector. The
restriction to zero-mean solutions implies that the case $k=0$ of
\eqref{mode} can be ignored.

\begin{lemma}\label{ModeLemma}
Any solution $\hat\phi$ of equation \eqref{mode} has an asymptotic 
expansion of the form
\begin{equation}
\hat\phi (\eta)=\bar W_k\cos(\sqrt{w}|k|(\eta-\bar\eta_k))+O(\eta^{-1}),
\end{equation}
for constants $\bar\eta_k$ and $\bar W_k$, in the limit $\eta\to\infty$.
\end{lemma}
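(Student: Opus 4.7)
Set $\omega=\sqrt{w}\lvert k\rvert$ (recall $k\neq 0$, so $\omega>0$) and rewrite the mode equation in the form
\begin{equation*}
\hat\phi''+\omega^2\hat\phi=\left(\nu^2-\tfrac14\right)\eta^{-2}\hat\phi,
\end{equation*}
so that the left-hand side is a harmonic oscillator and the right-hand side is a perturbation whose coefficient is integrable at infinity. The plan is to treat the right-hand side as a small forcing term and apply variation of parameters relative to the free solutions $\cos(\omega\eta)$ and $\sin(\omega\eta)$.

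First I would verify that $\hat\phi(\eta)$ is bounded as $\eta\to\infty$. This actually follows from the uniform $C^\infty$ bounds on $\psi$ already established in the text preceding the lemma (each Fourier coefficient is controlled by the $L^2$ norms of $\psi$ and its spatial derivatives, which are bounded). Alternatively, one can give a direct proof using the perturbed energy $\tfrac12(\lvert\hat\phi'\rvert^2+\omega^2\lvert\hat\phi\rvert^2)$, whose derivative is $(\nu^2-\tfrac14)\eta^{-2}\hat\phi\hat\phi'$, bounded by a constant times $\eta^{-2}$ times the energy, followed by Gronwall's inequality using the integrability of $\eta^{-2}$ on $[\eta_0,\infty)$.

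Next, I would write
\begin{equation*}
\hat\phi(\eta)=A(\eta)\cos(\omega\eta)+B(\eta)\sin(\omega\eta)
\end{equation*}
with the usual variation-of-parameters constraint $A'\cos(\omega\eta)+B'\sin(\omega\eta)=0$. A short computation yields
\begin{equation*}
A'(\eta)=-\omega^{-1}(\nu^2-\tfrac14)\eta^{-2}\sin(\omega\eta)\hat\phi(\eta),
\qquad
B'(\eta)=\omega^{-1}(\nu^2-\tfrac14)\eta^{-2}\cos(\omega\eta)\hat\phi(\eta).
\end{equation*}
Since $\hat\phi$ is bounded, both $A'$ and $B'$ are $O(\eta^{-2})$. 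Hence the improper integrals $\int^\infty A'$ and $\int^\infty B'$ converge, so $A$ and $B$ have finite limits $A_\infty$, $B_\infty$ as $\eta\to\infty$, with $A(\eta)-A_\infty=O(\eta^{-1})$ and similarly for $B$.

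Finally, I would put the asymptotic form into the shape requested by the lemma. If not both $A_\infty$ and $B_\infty$ vanish, set $\bar W_k=\sqrt{A_\infty^2+B_\infty^2}$ and choose $\bar\eta_k$ so that $\cos(\omega\bar\eta_k)=A_\infty/\bar W_k$ and $\sin(\omega\bar\eta_k)=-B_\infty/\bar W_k$; then $A_\infty\cos(\omega\eta)+B_\infty\sin(\omega\eta)=\bar W_k\cos(\omega(\eta-\bar\eta_k))$, yielding the stated expansion with remainder $O(\eta^{-1})$. If both limits vanish, take $\bar W_k=0$ and any $\bar\eta_k$. The only mild obstacle is keeping track of the fact that the implicit constants in the error depend on $k$ (through $\omega^{-1}$ and through the bound on $\hat\phi$), but this is immaterial because the statement is pointwise in $k$.
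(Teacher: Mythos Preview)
Your proof is correct. The approach differs from the paper's: instead of variation of parameters, the paper passes to polar coordinates $(\hat\psi,\omega^{-1}\hat\psi')=(r\cos\theta,r\sin\theta)$ and reads off directly from the resulting system that $r'=O(\eta^{-2})r$ and $\theta'=-\omega+O(\eta^{-2})$, yielding $r(\eta)=\bar W_k(1+O(\eta^{-1}))$ and $\theta(\eta)=-\omega(\eta-\bar\eta_k)+O(\eta^{-1})$. The two methods are essentially dual---your $(A,B)$ are related to the paper's $(r,\theta)$ by $r^2=A^2+B^2$ and a phase shift---and both exploit the integrability of the $\eta^{-2}$ coefficient to obtain limits with $O(\eta^{-1})$ remainders. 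The polar-coordinate route has the minor advantage that the boundedness of $\hat\phi$ comes for free from the equation for $r'$, whereas you establish it first by a separate Gronwall step (correctly noting, by the way, that you should use your direct energy argument rather than the $C^\infty$ bounds on $\psi$, since the lemma is stated for an arbitrary solution of the ODE, not necessarily a Fourier mode of a particular $\psi$). Your variation-of-parameters argument is the more classical Levinson-type presentation and makes the linear structure transparent; either is fine here.
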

\begin{proof}
To prove the lemma it is convenient to introduce polar coordinates
associated to the variables $\hat\psi$ and $\frac{1}{\sqrt{w}|k|}\hat\psi'$. 
Thus 
$\hat\psi=r\cos\theta$ and $\frac{1}{\sqrt{w}|k|}\hat\psi'=r\sin\theta$. This 
leads 
to the equations:
\begin{eqnarray}
r'&=&\frac{1}{\sqrt{w}|k|}\left(\nu^2-\frac14\right)r\eta^{-2}
\sin\theta\cos\theta
\label{evolr} \\ 
\theta'&=&-\sqrt{w}|k|+\frac{1}{\sqrt{w}|k|}\left(\nu^2-\frac14\right)
\eta^{-2}\cos^2\theta
\label{evoltheta}
\end{eqnarray}
It follows from \eqref{evoltheta} that
\begin{equation}\label{asymptheta}
\theta (\eta)=-\sqrt{w}|k|(\eta-\bar\eta_k)+O(\eta^{-1})
\end{equation}
for a constant $\bar\eta_k$. From \eqref{evolr} it follows that 
\begin{equation}\label{asympr}
r(\eta)=\bar W_k (1+O(\eta^{-1}))
\end{equation}
for a constant $\bar W_k$. As a consequence of \eqref{asymptheta} we have
\begin{equation}
\cos (\eta(\theta))=\cos(\sqrt{w}|k|(\eta-\bar\eta_k))+O(\eta^{-1}).
\end{equation} 
Together with \eqref{asympr} this gives the conclusion of the lemma.
\end{proof}

Consider a zero-mean solution of the type considered before. Let a function 
$W$ be defined by taking the sequence $\eta_j$ used above to consist of 
integer multiples of $2\pi$. We now show that the function $\psi-W$ 
tends to zero as $\eta\to\infty$. In order to do this it suffices to show 
that it does so along a subsequence of an arbitrary sequence of values 
$\zeta_j$ of $\eta$ tending to infinity. By passing to a subsequence as 
before it can be arranged that the translates by the amounts $\zeta_j$ 
converge uniformly on compact subsets 
as $j\to\infty$. Call the limit $Y$. The aim is to prove that $Y=0$. If
not there must be some mode $\hat Y$ which is non-zero. It can be obtained 
as the limit of some $\hat\psi-\hat W$. From Lemma \ref{ModeLemma} it can be 
seen that
$\hat W=\bar W_k\cos (\sqrt{w}|k|(\eta-\eta_k))$. Hence 
$\hat\psi-\hat W=O(\eta^{-1})$
and so $\hat Y=0$, a contradiction. Convergence of derivatives can be 
obtained in a corresponding way. Thus any solution can be written as
$\Phi(\eta,x)=\eta^{-\nu-\frac12}(W(\eta,x)+o(1))$. A similar result for
the polarized Gowdy equation  with a sharper estimate on the error term
was proved in \cite{jurke}.

A late-time asymptotic expansion has now been derived which involves a 
solution $W$ of the flat-space wave equation. Comparing with the results
on parametrizing solutions by the coefficients in an asymptotic expansion
near the singularity it is natural to ask if the function $W$ can be
prescribed freely. It will now be shown that this is the case by
following the proof of an analogous result for the polarized Gowdy 
equation due to Ringstr\"om \cite{ringstrom05}. Write an arbitrary 
zero-mean solution in the form
\begin{equation}
\Phi(\eta,x)=\eta^{-\nu-\frac12}W(\eta,x)+\omega (\eta,x).
\end{equation}  
Then $\omega$ satisfies the equation
\begin{equation}
\omega''+\eta^{-1}\omega'-w\Delta\omega=\left(\nu^2-\frac14\right)
\eta^{-\nu-\frac52}W.
\end{equation}
Define
\begin{equation}
H(\eta)=\frac12 \int_{T^3} |\omega'(\eta)|^2+w|\nabla\omega(\eta)|^2  
\end{equation}
and
\begin{equation}
\Gamma(\eta)=\frac{1}{2\eta}\int_{T^3} \omega(\eta)\omega'(\eta).
\end{equation} 
The aim is to study late times and attention will be restricted to the
region where $\eta\ge w^{-1}$. At this point it is necessary to assume 
that $w>0$. The following inequalities show the equivalence of $H$ and
$H+\Gamma$ as norms of $(\omega',\nabla\omega)$:
\begin{equation}
|\Gamma(\eta)|\le\frac{1}{2w\eta}H(\eta)\ \ \ ,\ \ \frac12 H\le H
+\Gamma\le\frac32 H.
\end{equation}
Now
\begin{multline}
\frac{d}{d\eta}\left[H+\Gamma\right]=-\frac{1}{\eta}(H+\Gamma)-\frac{4\nu+3}{2\eta}\Gamma
 \\
+\left(\nu^2-\frac14\right)\eta^{-\nu-\frac52}\int_{T^3}\omega'W
+\frac12\left(\nu^2-\frac14\right)\eta^{-\nu-\frac72}\int_{T^3}\omega W.
\end{multline}
Using the equivalence of $H+\Gamma$ and $H$ this can be used to derive
the following differential inequality
\begin{multline}
\frac{d}{d\eta}\left[H+\Gamma\right]\ge -\left(\frac{1}{\eta}+\frac{4\nu+3}{2w\eta^2}\right)
\left(H+\Gamma\right)
\\
-\eta^{-\nu-\frac52}\|W \|_{L^2}\left(\nu^2-\frac14\right)
\left(\frac{2 +\sqrt{w}}{\sqrt{2}}\right)\left(H+\Gamma\right)^{1/2}
\end{multline}
By analogy with equation (16) of \cite{ringstrom05} define
\begin{equation}
E_4(\eta)=\eta e^{\frac{4\nu+3}{2\eta w}}(H(\eta)+\Gamma(\eta)).
\end{equation}
This quantity satisfies an inequality of the form
\begin{equation}
E_4'(\eta)\ge -C\eta^{-\nu-2}\|W(\eta)\|_{L^2}E_4(\eta)^{1/2}
\end{equation}
for a positive constant $C$ depending on $w$. Since $\eta^{-\nu-2}$ is 
integrable at infinity this inequality can be used in just the same way 
as the corresponding inequality in \cite{ringstrom05}. In this way it 
can be proved that given a solution $W$ of the flat space wave equation
there is a corresponding solution $\Phi$ of \eqref{basic}. It follows from 
the proof that $E_4(\eta)=O(\eta^{-2\nu-2})$. Hence $H(\eta)=O(\eta^{-2\nu-3})$
and the solution decays like $\eta^{-3/2-\nu}$.

The information obtained concerning the asymptotics of the solutions
constructed starting from a solution $W$ of the wave equation is stronger
that what was proved about general solutions of \eqref{basic} up to this 
point. This can be improved on as follows. Given a solution $\Phi$ of
\eqref{basic} a solution $W$ of the flat space wave equation is obtained.
From there a solution $\tilde\Phi$ of \eqref{basic} is obtained with stronger 
information on the asymptotics. The aim is now to show that $\tilde\Phi=\Phi$.
To do this it is enough to show that each Fourier mode agrees. This means
showing that a solution $\hat\psi$ of \eqref{mode} vanishes if it tends to 
zero as $\eta\to\infty$. That the latter statement holds follows easily 
from \eqref{evolr}. What has been proved can be summed up in the following
theorem.

\begin{theorem}\label{LinearExpandingThm}
Let $\Phi$ be a global smooth solution of \eqref{basic}. Then
there exist constants $A$ and $B$ and a smooth solution $W$ of the equation 
$W''=w\Delta W$ with zero spatial average such that
\begin{equation} 
\Phi(\eta,x)=A+W(\eta,x)\eta^{-\nu-\frac12}+B\eta^{-2\nu}+O(\eta^{-\nu-\frac32})
\end{equation}
This asymptotic expansion may be differentiated term by term in space as
often as desired. 
\end{theorem}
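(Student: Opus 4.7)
The plan is to decompose $\Phi$ into its spatial mean (a homogeneous solution) and its zero-mean part and treat each separately. For the spatial mean, equation \eqref{basic} collapses to the ODE $\Phi'' + (2\nu+1)\eta^{-1}\Phi' = 0$, whose general solution is $A + B\eta^{-2\nu}$ for constants $A, B$; this produces the first and third terms in the stated expansion. Everything else in the theorem concerns the zero-mean part.

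For the zero-mean part I would pass to $\psi = \eta^{\nu+1/2}\Phi$, which satisfies \eqref{evolpsi}. The energy $E_3$ from \eqref{energy2} obeys $E_3'(\eta) \le C\eta^{-2}E_3(\eta)$ by Cauchy--Schwarz together with the Poincar\'e inequality (available because $\psi$ has zero spatial mean), and Gronwall gives a global bound. Because any spatial derivative of $\psi$ satisfies the same equation, the bound propagates to all derivatives, and Sobolev embedding upgrades it to uniform $C^\infty$ control. Arzel\`a--Ascoli plus a diagonal subsequence of translates $\psi(\cdot + \eta_j)$, with $\eta_j$ tending to infinity, then yields a limit $W$ satisfying the flat-space wave equation. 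Lemma \ref{ModeLemma} forces this limit to be independent of the chosen sequence: the mode-by-mode asymptotic $\bar W_k \cos(\sqrt{w}|k|(\eta - \bar\eta_k)) + O(\eta^{-1})$ applied to any other subsequential limit $Y$ shows $\hat Y = 0$ for every $k$, hence $Y = 0$. Consequently $\psi = W + o(1)$, which already gives $\Phi(\eta,x) = A + B\eta^{-2\nu} + \eta^{-\nu - 1/2}(W + o(1))$.

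To sharpen the error to $O(\eta^{-\nu - 3/2})$ I would import the Ringstr\"om-type construction outlined in the text. Write a candidate solution as $\Phi = A + B\eta^{-2\nu} + \eta^{-\nu-1/2}W + \omega$, so that $\omega$ solves the equation with right-hand side $(\nu^2 - 1/4)\eta^{-\nu - 5/2}W$. Working in the region $\eta \ge w^{-1}$ (hence requiring $w > 0$), the modified energy $E_4(\eta) = \eta e^{(4\nu+3)/(2\eta w)}(H(\eta) + \Gamma(\eta))$ satisfies the differential inequality $E_4'(\eta) \ge -C\eta^{-\nu - 2}\|W(\eta)\|_{L^2} E_4(\eta)^{1/2}$. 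Since $\eta^{-\nu-2}$ is integrable at infinity, this inequality can be integrated from $\infty$ to construct, for any prescribed $W$, a solution $\tilde\Phi$ with $E_4 = O(\eta^{-2\nu-2})$, so that $H = O(\eta^{-2\nu-3})$ and $\omega = O(\eta^{-\nu-3/2})$.

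It remains to identify the given $\Phi$ with this $\tilde\Phi$, which I would do mode-by-mode: $\Phi - \tilde\Phi$ is a zero-mean solution whose rescaling has zero asymptotic profile, so each mode $\hat\psi - \hat{\tilde\psi}$ tends to $0$ as $\eta \to \infty$. Equation \eqref{evolr} shows $|(\log r)'| \le C\eta^{-2}$, so $r$ has a positive limit at infinity unless $r \equiv 0$; hence each offending mode vanishes identically and $\Phi = \tilde\Phi$. I expect the main obstacle to be Step 3, the clean derivation of the Ringstr\"om-type differential inequality for $E_4$ and its backward integration from infinity: the cancellations with the $(4\nu+3)/(2\eta w)$ exponential factor and the equivalence $\tfrac12 H \le H + \Gamma \le \tfrac32 H$ need to be established by careful bookkeeping of the source term $(\nu^2 - 1/4)\eta^{-\nu - 5/2}W$ against the damping generated by the $\eta^{-1}\omega'$ term.
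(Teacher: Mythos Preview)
Your proposal is correct and follows essentially the same route as the paper: split off the spatial mean to get $A+B\eta^{-2\nu}$, rescale the zero-mean part to $\psi=\eta^{\nu+1/2}\Phi$, use the $E_3$ energy with Poincar\'e and Gronwall to get uniform $C^\infty$ bounds, extract $W$ by compactness of translates and pin it down via Lemma~\ref{ModeLemma}, then run the Ringstr\"om $E_4$ argument to build a solution $\tilde\Phi$ with the sharp $O(\eta^{-\nu-3/2})$ remainder and identify $\Phi=\tilde\Phi$ mode by mode using \eqref{evolr}. The only cosmetic discrepancy is that the damping coefficient in the $\omega$-equation should be $(2\nu+1)\eta^{-1}$ rather than $\eta^{-1}$ (the paper itself has a misprint there); this does not affect the structure of the $H+\Gamma$ estimate, only the numerical constants in the exponential weight.
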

\noindent
Note that the third explicit term in this asymptotic expansion is 
often no larger than the error term. The function $W$ can be prescribed 
freely.

\section{Late-time asymptotics for a general equation of state}
\label{asymplatetimegen} 

It will now be investigated how the results of the previous section can be 
extended to the case of a more general equation of state. The class of 
equations of state which will be treated is defined by requiring that they 
admit an asymptotic expansion of the form
\begin{equation}\label{eqnofstate2}
f(\epsilon)\sim w\epsilon+\sum_{j=1}^\infty f_j\epsilon^{a_j}
\end{equation} 
for $\epsilon\to 0$. Here $w\ge 0$, the coefficients $a_j$ are all greater 
than one and form an increasing sequence. To ensure the positivity of $f'$
it is assumed that if $w=0$ the coefficient $f_1$ is positive. This form 
of the equation of state may be compared with that of (\ref{eqnofstate}).
It is further assumed that this expansion retains its validity when 
differentiated term by term as often as desired. An example is given by the 
polytropic equation of state (\ref{polytropic}). In that case $w=0$, $f_1=K$ 
and $a_1=\frac{n+1}{n}$. With this assumption information can be obtained on 
the leading order asymptotics of the background solution as $\eta\to\infty$. 
To simplify the notation define $\sigma=a_1-1$. It is convenient to use the
mass density once more, writing (\ref{massdensity}) in the equivalent form
\begin{equation}\label{massdensity2}
m(\epsilon)=\exp\left\{-\int_\epsilon^1 (\xi+f(\xi))^{-1}d\xi \right\}
\end{equation}
Then $m(\epsilon)$ has an expansion about $\epsilon=0$ where the leading term
is proportional to $\epsilon^{\frac{1}{w+1}}$. In particular, when $w=0$ the 
leading term is linear. Using the fact that $m$ is proportional to $a^{-3}$ 
for any equation of state leads to an asymptotic expansion for $\epsilon$ in 
terms of $a$. Putting this information into (\ref{SecondFriedmann}) shows that
$a(\eta)$ has an expansion in the limit $\eta\to\infty$ with the leading 
term proportional to $\eta^{\frac{2}{3w+1}}$. Finally it follows that 
$\epsilon$ and ${\cal H}$ have expansions with leading terms proportional
to $\eta^{-\frac{6(1+w)}{1+3w}}$ and $\eta^{-1}$ respectively. With the leading 
asymptotics of the background solution having been determined it is possible 
to derive asymptotics for the coefficients in the equation for $\Phi$. 

As in the case of a linear equation of state it is convenient to treat 
homogeneous and zero-mean solutions separately. The homogeneous solutions
will be analysed first. This leads to consideration of the equation obtained 
from (\ref{MainEquation}) by omitting the term containing spatial derivatives. 
It is convenient here to exclude the case of a linear equation of state
which was previously analysed so as to ensure that $\sigma$ is defined 
uniquely in terms of the equation of state. The coefficients satisfy:
\begin{equation}
3(1+f'(\epsilon))=3(1+w+(\sigma+1)f_1\eta^{-\beta})+o(\eta^{-\beta})
\end{equation} 
and 
\begin{equation}
3\left(f'(\epsilon)-\frac{f(\epsilon)}{\epsilon}\right)
=3f_1\sigma\eta^{-\beta}+o(\eta^{-\beta})
\end{equation}
where $\beta=\frac{6\sigma(1-w)}{1+3w}$. Define
\begin{equation}
F=\frac12\Phi'^2+\alpha\eta^{-2-\beta}\Phi^2
\end{equation}
where $\alpha$ is a positive constant which needs to be chosen appropriately 
in what follows. Computing the derivative of $F$ with respect to 
$\eta$ and using the equation gives a sum of terms involving $\Phi'^2$, 
$\Phi^2$ and $\Phi\Phi'$. The aim is to show that $F$ is bounded and to do 
this it suffices to consider arbitrarily late times. The leading order terms 
in the coefficients of $\Phi^2$ and $\Phi'^2$ are 
$-\frac{6(1+w)}{1+3w}\eta^{-1}$ and 
$-A\eta^{-3-\beta}$ respectively, where $A$ is a positive constant. The 
coefficient of $\Phi\Phi'$ has a leading term proportional to $\eta^{-2-\beta}$ 
for a general choice of $\alpha$. However if $\alpha$ is chosen to be half the 
coefficient of the leading order term in the expansion of the coefficient of 
$\Phi$ in (\ref{MainEquation}) then a cancellation occurs and the coefficient 
becomes $o(\eta^{-2-\beta})$. This choice is made here. The aim is to show that 
the term containing $\Phi\Phi'$ can be absorbed by the sum of the other two so 
as to leave a non-positive remainder. To do this 
the inequality
\begin{equation}
|\eta^{-2-\beta}\Phi\Phi'|\le\frac12 (\eta^{-1}\Phi'^2
+\eta^{-3-2\beta}\Phi^2)
\end{equation}
is used. The powers of $\eta$ which arise from this inequality match those
in the leading order terms in the coefficients of the manifestly negative
terms in the expression for the derivative of $F$ with respect to $\eta$. 
Thus at late times the cross-term can be absorbed in the terms with the 
desired sign. The conclusion is that $F$ is bounded. In fact this can be 
improved somewhat. The derivative of $F$ can be estimated above by 
$-2\gamma \eta^{-1}F$ for any positive constant $\gamma<2\nu+1$. 
This means that $\Phi'$ decays like $\eta^{-\gamma}$. It can be concluded that 
$\Phi$ is bounded. From the evolution equation for $\Phi$ and the boundedness 
statements already
obtained it follows that $(\eta^{2\nu+1}\Phi')'$ is integrable. Thus 
$\Phi=A+B\eta^{-2\nu}+\ldots$ for constants $A$ and $B$ and the leading order 
behaviour is as in the case of a linear equation of state. 

It turns out to be useful for the analysis of the zero-mean solutions in the
expanding direction to introduce a new time variable $\tau$ satisfying the 
relation $d\tau/d\eta=\sqrt{f'(\epsilon)}$. Substituting the asymptotics of 
$f'(\epsilon)$ in terms of $\eta$ into this provides an asymptotic 
expansion for $\tau$ in terms of $\eta$. For $w>0$ a linear relation is 
obtained in leading order while for $w=0$ and $\sigma\ne\frac13$ the 
expansion reads 
\begin{equation}\label{taudef}
\tau=C_1\eta^{1-3\sigma}+\tau_\infty+\ldots
\end{equation} 
for constants $C_1$ and $\tau_\infty$. Note that the second term in this
expansion is only smaller than the first for $\sigma<\frac13$. For $w=0$ and 
$\sigma=\frac13$ the power in this expression gets replaced by $\log\eta$. 
From these facts it can be seen that $\tau\to\infty$ for $\eta\to\infty$ when 
$w>0$ or when $w=0$ and $\sigma\le\frac13$. In contrast $\tau$ tends to the
finite limit $\tau_\infty$ for $\eta\to\infty$ when $w=0$ and 
$\sigma>\frac13$. This is a symptom of a bifurcation where the asymptotics of 
the linearized solution undergoes a major change. For convenience we say that 
the dynamics for an equation of state with an asymptotic expansion of the form 
(\ref{eqnofstate2}) is underdamped if $w>0$ or $\sigma<\frac13$, critical if 
$w=0$ and $\sigma=\frac13$ and overdamped if $w=0$ and $\sigma>\frac13$.

Next the late-time behaviour will be analysed for zero-mean solutions with
an equation of state corresponding to underdamped dynamics. The first step is 
to introduce the time variable $\tau$ into 
(\ref{MainEquation}) with the result:
\begin{equation}\label{MainEquationtau}
\Phi_{\tau\tau}+3Z\tilde{\cal H}\Phi_\tau
+3Y\tilde{\cal H}^2\Phi-\Delta\Phi=0
\end{equation}
where 
\begin{eqnarray}
&&Y=f'(\epsilon)-\frac{f(\epsilon)}{\epsilon},               \\
&&Z=1+f'(\epsilon)-\frac12\frac{(\epsilon
+f(\epsilon))f''(\epsilon)}{f'(\epsilon)}
\end{eqnarray}              
and
$\tilde{\cal H}=a^{-1}a_\tau$. Derivatives with respect to $\tau$ are denoted
by subscripts. Next the term containing $\Phi_\tau$ will
be eliminated by multiplying $\Phi$ by a suitable factor $\Omega^{-1}$. Choose
$\Omega$ to satisfy
\begin{equation}
\frac{\Omega_\tau}{\Omega}=-\frac32 Z\tilde{\cal H}
\end{equation}
For all three types the behaviour of $\Omega$ as a function of $a$
in the limit $\epsilon\to 0$ can be determined. The result is that the
leading order term in $\Omega$ is proportional to $a^{-\frac32(1+w)}$ for $w>0$ 
and proportional to $a^{-\frac32(1-\frac{\sigma}{2})}$ for $w=0$. The function 
$\Psi=\Omega^{-1}\Phi$ satisfies an equation of the form
\begin{equation}\label{taudynamics}
\Psi_{\tau\tau}=A(\epsilon)\tilde{\cal H}^2\Psi+\Delta\Psi
\end{equation}  
where $A(\epsilon)$ is 
a rational function of $\epsilon$, $f(\epsilon)$, $f''(\epsilon)$ and 
$f'''(\epsilon)$. Under the given assumptions on the equation of state it
is bounded. Proving this requires examining many terms but is routine. For 
example the only term containing the third derivative of $f$ is 
$\frac{3(\epsilon+f(\epsilon))^2f'''(\epsilon)}{2f'(\epsilon)}$. The leading 
order terms in the asymptotic expansions of numerator and denominator are
both proportional to $\epsilon^\sigma$. Note also that the leading order
term in the expansion for $\tilde{\cal H}$ is proportional to $\tau^{-1}$ for
any $\sigma<\frac13$. Note for comparison that $\tilde{\cal H}$ tends to a 
constant value as $\tau\to\infty$ in the case $\sigma=\frac13$.

Define an energy by   
\begin{equation}
E_5 (\tau)=\frac12\int\Psi_\tau^2+|\nabla\Psi|^2.
\end{equation}
Then using the same techniques as in previous energy estimates shows that 
there is a constant $C$ such that  
\begin{equation}
\frac{dE_5}{d\tau}\le C|A|\tilde{\cal H}^2E_5
\end{equation}
Using the information available concerning $A$ and $\tilde{\cal H}$ shows
that $E_5$ is bounded in the future. Taking derivatives of the equation and
using the same arguments as in previous cases shows that $\Psi$ and its 
derivatives of all orders with respect to $x$ and $\tau$ are bounded. It 
follows that any sequence of translates $\Psi(\tau+\tau_n)$ for a sequence
$\tau_n$ tending to infinity has a subsequence which converges on 
compact subsets to a limit $W$.  

Doing a Fourier transform of the equation (\ref{taudynamics}) in space leads
to the mode equation
\begin{equation}
\hat\Psi_{\tau\tau}=-|k|^2\hat\Psi+A\tilde{\cal H}^2\hat\Psi
\end{equation}
Introducing polar coordinates in the 
$(\frac{\hat\Psi}{|k|},\frac{\hat\Psi_\tau}{|k|})$-plane leads to the
system
\begin{eqnarray}
\frac{dr}{d\tau}&=&\frac{1}{|k|}Ar\tilde{\cal H}^{-2}
\sin\theta\cos\theta
\label{evolr2} \\ 
\frac{d\theta}{d\tau}&=&-|k|+\frac{1}{|k|}A
\tilde{\cal H}^{-2}\cos^2\theta
\label{evoltheta2}
\end{eqnarray}
This implies that
\begin{equation}
\theta (\tau)=-|k|(\tau-\bar\tau_k)+O(\tau^{-1})
\end{equation}
and 
\begin{equation}
r(\tau)=\bar W_k (1+O(\tau^{-1})) 
\end{equation}
for some constants $\bar\tau_k$ and $\bar W_k$. Arguing as in the case of a
linear equation of state leads to the relation $\Psi (\tau,x)=W(\tau,x)+o(1)$
where $W$ is a solution of the equation $W_{\tau\tau}=\Delta W$. Using the 
form of the leading order term in $\Omega$ as a function of $a$, it can be 
shown that the leading order term in $\Omega$ as a function of $\tau$ is
given by $\tau^{-\frac{3(1+w)}{1+3w}}=\tau^{-\nu-\frac12}$ and 
$\tau^{-\frac{3(1-\frac{\sigma}{2})}{(1-3\sigma)}}$ in the cases $w>0$ and
$w=0$ respectively. Note that the first of these reproduces the result in the 
case of a linear equation of state. It does not seem to be possible to write 
the expansion directly in terms of $\eta$ in such
a way that it gives more insight than the expression in terms of $\tau$.
The leading asymptotics of a zero-mean solution is obtained by taking a 
solution of the flat space wave equation, distorting the time variable by 
a diffeomorphism and multiplying by a power of the time coordinate which has
been explicitly computed.

Consider next the case $w=0$, $\sigma>\frac13$ (overdamped case). 
The time coordinate $\tau$ tends to the finite limit $\tau_\infty$ as 
$\eta\to\infty$.
Define $G=\tilde{\cal H}^{-2}\partial_\tau\tilde{\cal H}$. The function $G$ 
tends to the limit $\frac32(\sigma-\frac13)$ as $\tau\to\tau_\infty$. Let
\begin{equation}
E_6=\int \Phi_\tau^2+|\nabla\Phi|^2+\Lambda^2\tilde{\cal H}^2\Phi^2
\end{equation}
for a constant $\Lambda$ which remains to be chosen. For a constant $\lambda$
computing $\partial_\tau (a^{2\lambda}E_6)$ gives rise to a sum of expressions
containing $\Phi^2$, $\Phi_\tau^2$, $\Phi\Phi_\tau$ and $|\nabla\Phi|^2$.
Using the inequality 
\begin{equation}
\tilde{\cal H}^2\Phi_\tau\Phi\le \frac{1}{2\delta}\tilde{\cal H}\Phi_\tau^2
+\frac{\delta}{2}\tilde{\cal H}^3\Phi^2 
\end{equation}
leads to an inequality where the term involving $\Phi\Phi_\tau$ has been 
eliminated. To obtain some control on the energy by means of the inequality 
the coefficients $\Lambda$ and $\lambda$ should be chosen in such a way that 
all terms on the right hand side are manifestly non-positive. The conditions
for this to happen are the inequalities $\lambda\le 0$,
\begin{equation}\label{ineq4}
\frac{1}{2\delta}|\Lambda^2-3Y|\le 3Z-\lambda 
\end{equation}
and 
\begin{equation}\label{ineq5}
\frac{\delta}{2}|\Lambda^2-3Y|\le -\Lambda^2(\lambda+G)
\end{equation}
Note that these inequalities imply in particular that $\lambda<0$. 
Consider now the limit $\tau\to\tau_\infty$ where $Y$ behaves asymptotically 
like $f_1\sigma\epsilon^\sigma$ and $Z\to 1-\frac12\sigma$. In this 
limit the inequality (\ref{ineq5}) reduces to 
$\lambda\le -\frac32 (\sigma-\frac13)-\frac{\delta}{2}$. Suppose therefore
that $\lambda<-\frac32 (\sigma-\frac13)$. Then by choosing
$\delta$ sufficiently small it can be arranged that the limiting inequality
is satisfied. In the limit the inequality (\ref{ineq4}) reduces to
$\frac{\Lambda^2}{2\delta}\le 3-\frac32\sigma-\lambda$. Choose 
$\Lambda$ so that this inequality is satisfied strictly. With these choices 
both inequalities are satisfied strictly in the limit. For $\tau$ sufficiently
close to $\tau_\infty$ the coefficients in (\ref{ineq4}) and (\ref{ineq5})
are as close as desired to their limiting values. Making them close enough
ensures that these two inequalities continue to be satisfied. It follows 
that with these choices of the parameters $\partial_\tau (a^{2\lambda}E_6)$
is non-positive at late times. It can be concluded that $E_6=O(a^{-2\lambda})$.
This gives a limit on the growth rate of $E_6$ in terms of that of the scale
factor. As in previous cases corresponding estimates can be obtained for
derivatives and as a consequence pointwise estimates derived. It follows
that $\Phi=O(a^{-\lambda}\tilde{\cal H}^{-1})$. From what is known about the
background solution it follows that $\tilde{\cal H}$ is proportional to
$a^{\frac32 (\sigma-\frac13)}$. Thus if $\rho=-\frac32 (\sigma-\frac13)-\lambda$
then $\Phi=O(a^\rho)$. This power is positive but may be made as small as 
desired by choosing $\lambda$ suitably. By the usual methods similar bounds 
can be obtained for spatial derivatives of $\Phi$. 

To get more information about the asymptotics as $\tau\to\tau_\infty$ it is
convenient to rewrite the equation in terms of the new time variable 
$s=\tau_\infty-\tau$. The resulting equation is
\begin{equation}\label{MainEquations}
\Phi_{ss}-3Z\tilde{\cal H}\Phi_s
+3Y\tilde{\cal H}^2\Phi-\Delta\Phi=0
\end{equation} 
As $s\to 0$ the coefficient $Z$ tends to $1-\frac{\sigma}2$ while 
$\tilde {\cal H}$ and $Y\tilde {\cal H}^2$ are proportional in leading order 
to $s^{-1}$ and $s^{\frac{2}{3(\sigma-1/3)}}$ respectively. The last exponent is 
positive for any $\sigma>1/3$ so that the corresponding coefficient tends to 
zero as $s\to 0$. Let $B$ be a positive solution of the equation 
$\frac{dB}{ds}=-3Z\tilde {\cal H} B$. Then (\ref{MainEquations}) implies
the following integral equation:
\begin{equation}
\Phi_s=\frac{1}{B}\left(\bar\Phi_1+\int_0^s B(-3Y{\cal H}^2\Phi
+\Delta\Phi)\right)
\end{equation}
for a function $\bar\Phi_1(x)$. Here the fact has been used that the integral 
occurring in this equation converges. This follows from the fact that in 
leading order $B$ is proportional to $s^{-\frac{\sigma-2}{\sigma-1/3}}$ and the 
bounds already obtained for $\Phi$ and its derivatives. When $B^{-1}$ diverges 
faster than $s^{-1}$ in the limit $s\to 0$, which happens for 
$\sigma<\frac76$, the known bounds on $\Phi$ imply that $\bar\Phi_1=0$. Hence 
$\Phi$ is bounded in the limit $s\to 0$ in that case. When $\sigma>\frac76$ it 
can also be concluded that $\Phi$ is bounded. For $\sigma=\frac76$ a 
logarithmic divergence of $\Phi$ is not ruled out. In all cases the integral 
equation can be used to obtain an asymptotic expansion for $\Phi$. 
Schematically this expansion is of the form
\begin{equation}
\Phi (\eta,x)=\sum_i \Phi_i(x)\zeta_i (\eta).
\end{equation}
for some functions $\zeta_i$ with $\zeta_{i+1}(\eta)=O(\zeta_i(\eta))$ for 
each $i$. This is very different from the expansion in the limit 
$\eta\to\infty$ obtained when $w>0$ or $\sigma<\frac13$. In the present case, 
scaling the solution by a suitable function of $\eta$ gives a result which 
converges to a function of $x$ as $\eta\to\infty$. In the other case a
similar rescaling can lead to a profile which moves around the torus with 
constant velocity. (In general it leads to a superposition of profiles of 
this kind.) In the latter case there are waves which continue to propagate at 
arbitrarily late times. In the case $\sigma>\frac13$ the waves \lq freeze\rq. 
This is reminiscent of the late-time asymptotics of the gravitational field in 
spacetimes with positive cosmological constant (cf. \cite{rendall04}).  

To make this argument more concrete consider the special case where the
equation of state is $f(\epsilon)=f_1\epsilon^{\sigma+1}$ for some $\sigma$ 
between $\frac13$ and $\frac76$. Using the convergence of the integral it 
follows immediately that $\Phi(s,x)=\Phi_0(x)+O(s^2)$ for some function 
$\Phi_0$. Putting this information back into the integral equation gives
$\Phi(s,x)=\Phi_0 (x)+\frac{\sigma-1/3}{4-2\sigma}\Delta\Phi_0 (x)s^2
+\dots$. 

Consider finally the case $w=0$, $\sigma=\frac13$ (critical case). Then 
$\eta=\eta_0e^{\frac{\tau}{C_1}}+\ldots$ where $\eta_0$ is a constant and $C_1$ 
corresponds to the constant appearing in (\ref{taudef}). The arguments leading
to the estimate $\Phi=O(\tau^\rho)$ can be carried out as in the case
$\sigma>\frac13$. The only difference is that the limit $\tau\to\tau_\infty$
is replaced by $\tau\to\infty$. In the case $\sigma=\frac13$ the quantity
$\tilde{\cal H}$ tends to a constant for $\tau\to\infty$ and 
$Y\tilde{\cal H}^2$ is proportional to $e^{-\frac{6}{C_1}\tau}$ in leading order.
A quantity $B$ can be introduced as before and an integral equation obtained.
In this case $B$ is a decaying exponential. Unfortunately it does not seem to
be possible to use this integral equation to refine the asymptotics in this 
case and this matter will not be pursued further here. 

\vskip 10pt
\noindent
{\it Acknowledgements}

\noindent
The authors gratefully acknowledge the hospitality and financial support
of the Mittag-Leffler Institute where part of this work was carried out.


\begin{thebibliography}{12}

\bibitem{ar} Andersson, L. and Rendall, A. D. 2001 Quiescent cosmological 
singularities. Commun. Math. Phys. 218, 479--511. 
\bibitem{chandrasekhar} Chandrasekhar, S. 1961 Hydrodynamic and hydromagnetic
stability. Oxford University Press, Oxford.
\bibitem{im} Isenberg, J. and Moncrief, V. 1990 Asymptotic behaviour of the
gravitational field and the nature of singularities in Gowdy spacetimes.
Ann. Phys. 199, 84--122.
\bibitem{jurke} Jurke, T. 2003 On the future asymptotics of polarized
Gowdy $T^3$-models. Class. Quantum Grav. 20, 173--191. 
\bibitem{kellersegel} Keller, E. F. and Segel, L. A. 1970 Initiation of slime 
mold aggregation viewed as an instability. J. Theor. Biol. 26, 399--415.
\bibitem{kichenassamy} Kichenassamy, S. 1996 Fuchsian equations in Sobolev 
spaces and blow-up. J. Diff. Eq. 125, 299--327.
\bibitem{kr} Kichenassamy, S. and Rendall, A. D. 1998 Analytic description 
of singularities in Gowdy spacetimes. Class. Quantum Grav. 15, 1339--1355.
\bibitem{mukhanov} Mukhanov, V. 2005 Physical foundations of cosmology. 
Cambridge University Press, Cambridge.
\bibitem{rendall00} Rendall, A. D. 2000 Fuchsian analysis of singularities 
in Gowdy spacetimes beyond analyticity. Class. Quantum Grav. 17, 3305--3316.
\bibitem{rendall04} Rendall, A. D. 2004 Asymptotics of solutions of the
Einstein equations with positive cosmological constant. Ann. Inst. H. 
Poincar\'e 5, 1041--1064.
\bibitem{ringstrom05} Ringstr\"om, H. 2005 Data at the moment of infinite
expansion for polarized Gowdy. Class. Quantum Grav. 22, 1647--1653.
\bibitem{turing} Turing, A. M. 1952 The chemical basis of morphogenesis.
Phil. Trans. R. Soc. Lond. B237, 37-72.

\end{thebibliography}
\end{document}